\let\diameter\@undefined                        
\newtheorem{theorem}{Theorem}[section]
\newtheorem{lemma}[theorem]{Lemma}
\newtheorem{proposition}[theorem]{Proposition}
\newtheorem{corollary}[theorem]{Corollary} 
\newtheorem{definition}[theorem]{Definition}
\newtheorem{assumption}[theorem]{Assumption}
\newtheorem{remark}[theorem]{Remark}
\definecolor{lightgrey}{rgb}{.7,.7,.7}
\newcommand{\hellgrau}[1]{\textcolor{lightgrey}{#1}}
\newcounter{notectr}
\newcommand{\note}[1]{\ifthenelse{\thenotectr=1}{\hellgrau{#1}}{}}
\DeclareMathOperator{\spn}{span}
\newcommand{\argmin}{\ensuremath{\operatorname{argmin}}}
\numberwithin{equation}{section}
\newcommand{\definedas}{\mathrel{:=}}
\newcommand{\energy}{\ensuremath\mathcal{J}}
\newcommand{\mcP}{\mathcal{P}}
\newcommand{\setR}{\mathbb{R}}
\newcommand{\setN}{\mathbb{N}}
\newcommand{\Rn}{\setR^n}
\newcommand{\Rm}{\setR^m}
\newcommand{\dual}[2]{\ensuremath{\left\langle#1,\,#2\right\rangle}}
\newcommand{\tria}{\ensuremath{\mathcal{T}}}
\newcommand{\VT}{\ensuremath{\mathbb{V}(\tria)}}
\newcommand{\VoT}{\ensuremath{\mathbb{V}_0(\tria)}}
\newcommand{\nodes}{\ensuremath{\mathcal{N}}}
\newcommand{\ie}{i.e.,\xspace}
\newcommand{\Frechet}{Fr{\'e}chet\xspace}
\renewcommand{\P}{\ensuremath{\mathbb{P}}}
\renewcommand{\vec}[1]{\boldsymbol{#1}}
\newcommand{\sides}{\ensuremath{\mathcal{S}}}
\newcommand{\convexhull}{\operatorname{conv\,hull}}
\newcommand{\extr}{\operatorname{extr}}
\providecommand{\settmp}[2]{{#1\{{#2}#1\}}}
\providecommand{\set}[1]{\settmp{}{#1}}
\providecommand{\abs}[1]{{\lvert{#1}\rvert}}
\providecommand{\support}{\ensuremath{\operatorname{supp}}}
\begin{document}

\title[Convex Hull Property for Minimisers]{Convex Hull Property and Maximum Principle for Finite Element
  Minimisers of\\ General Convex Functionals}

\author[L.~Diening]{Lars Diening}
\address{Lars Diening,
 Mathematisches Institut der Universit{\"a}t M{\"u}nchen,
 Theresienstrasse 39, D-80333 M{\"u}nchen, Germany
 }%
\urladdr{http://www.mathematik.uni-muenchen.de/~diening}
\email{Lars.Diening@mathematik.uni-muenchen.de}

\author[C.~Kreuzer]{Christian Kreuzer}
\address{Christian Kreuzer,
 Fakult{\"a}t f{\"u}r Mathematik,
 Ruhr-Universit{\"a}t Bochum,
 Universit{\"a}tsstrasse 150, D-44801 Bochum, Germany
 }%
\urladdr{http://www.ruhr-uni-bochum.de/ffm/Lehrstuehle/Kreuzer/}
\email{christan.kreuzer@rub.de}

\author[S.~Schwarzacher]{Sebastian Schwarzacher}
\address{Sebastian Schwarzacher,
 Mathematisches Institut der Universit{\"a}t M{\"u}nchen,
 Theresienstrasse 39, D-80333 M{\"u}nchen, Germany
 }%
\urladdr{http://www.mathematik.uni-muenchen.de/~schwarz/}
\email{sebastian.scharzacher@mathematik.uni-muenchen.de}

\begin{abstract}
  The convex hull property is the natural generalization of maximum principles
  from scalar to vector valued functions. Maximum principles for
  finite element approximations are often crucial for the preservation of
  qualitative properties of the respective physical model. In this
  work we develop a convex hull property for
  $\P_1$ conforming finite elements on simplicial non-obtuse meshes.  
  The proof does not resort to linear structures of partial
  differential equations but directly addresses properties of the
  minimiser of a convex energy functional.
  Therefore, the result holds for very general nonlinear partial
  differential equations including e.g.
  the $p$-Laplacian and the mean curvature problem.
  In the case of scalar equations the introduce techniques can be used
  to prove standard discrete maximum principles for nonlinear
  problems. We conclude by proving a strong discrete convex hull property on
  strictly acute triangulations.  
\end{abstract}

\keywords{discrete maximum principle \and strong discrete maximum principle \and finite elements \and nonlinear pde
  \and mean curvature \and p-Laplace}
  \subjclass[2010]{65N30\and 35J60 \and 35J92 \and
    35J93}

\maketitle


\section{Introduction}
\label{sec:introduction}

Let the bounded polyhedral domain $\Omega\subset \Rn$, $n \geq 2$, be
partitioned by a conforming simplicial triangulation $\tria$. We
denote by $\VT^m$, $m\in\setN$, the finite
element space of $m$-dimensional vector valued, continuous, and piece
wise affine functions over $\tria$ and by $\VoT^m\subset \VT^m$ the subspace 
of functions with zero boundary values. For $\vec{G}\in\VT^m$ we consider minimising
problems of the form
\begin{align}
  \label{eq:prob}
  \energy(\vec{V}):=\int_\Omega F\big(x,\abs{\nabla \vec{V} (x)}\big)\,{\rm d}x\to
  \min \qquad\text{in}~ \vec{G}+\VoT^m.
\end{align}
We suppose that \eqref{eq:prob} admits a unique minimiser and that
the mapping $\Omega\times \setR_\ge \ni(x,y)\mapsto
F(x,y)\in\setR$ is monotone in its second argument; 
for the precise assumptions see
Section~\ref{sec:energy-minimisation}.

The following theorem is the main result of this work.

\begin{theorem}[Convex hull property]\label{t:main}
  Let the triangulation $\tria$ be non-obtuse, \ie the angle between
  any two sides of any simplex $T\in\tria$ is less or equal $\pi/2$. 
  
  Then the unique minimiser $\vec{U}\in \vec{G}+\VoT^m$
  of \eqref{eq:prob}
  satisfies
  \begin{align}\label{eq:CHP}
    \vec{U}(\Omega)\subset
    \operatorname{conv\,hull}\big(\vec{U}(\partial\Omega)\big)
    =\operatorname{conv\,hull}\big(\vec{G}(\partial\Omega)\big).
    \tag{CHP}
  \end{align}
\end{theorem}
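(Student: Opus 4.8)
The plan is to argue by contradiction using a truncation (projection) argument: if the minimiser $\vec{U}$ leaves the closed convex set $K := \convexhull(\vec{G}(\partial\Omega))$ somewhere in $\Omega$, then composing $\vec{U}$ with the nearest-point projection $\pi_K : \Rm \to K$ should strictly decrease the energy $\energy$, contradicting minimality. The first step is therefore to record the basic facts about $\pi_K$: it is $1$-Lipschitz, it is the identity on $K$, and since $\vec{G}(\partial\Omega) \subset K$ the competitor $\pi_K \circ \vec{U}$ again lies in $\vec{G} + \VoT^m$ (it has the same boundary values as $\vec{U}$). The key point is that $\pi_K \circ \vec{U}$ is again in $\VT^m$, i.e.\ it is still piecewise affine: this is where the geometry of $K$ as a \emph{polytope} enters — on each simplex $T$, $\vec{U}|_T$ is affine, its image is a (possibly degenerate) simplex, and $\pi_K$ is piecewise affine relative to the normal-cone decomposition of $\Rm$ induced by $K$; however, a generic affine image can be cut by several faces of $K$, so $\pi_K\circ \vec{U}|_T$ need not be affine on $T$ itself. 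The honest fix is to observe that $\pi_K\circ\vec U$ is piecewise affine on a \emph{refinement} of $\tria$ and is Lipschitz, hence admits its own nodal interpolant $I_\tria(\pi_K\circ \vec U)\in \VT^m$, and one compares $\energy$ of that instead; alternatively one truncates coordinate-wise against half-spaces one at a time, each such truncation preserving the mesh exactly.

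The heart of the matter is the energy comparison, and this is where non-obtuseness is used. On a fixed simplex $T$ with vertices $z_0,\dots,z_n$, write $\vec W := \pi_K\circ \vec U$ (or its interpolant). I want to show $\abs{\nabla \vec W}\le \abs{\nabla\vec U}$ pointwise on $T$, for then monotonicity of $F(x,\cdot)$ in its second argument gives $\energy(\vec W)\le \energy(\vec U)$ immediately, and strict inequality on the set where $\vec U\notin K$. For a $\P_1$ function on $T$ the gradient is reconstructed from nodal differences: there is the classical identity $\abs{\nabla \vec V}^2 = -\sum_{i<j} c_{ij}\,\abs{\vec V(z_i)-\vec V(z_j)}^2$ where the coefficients $c_{ij}$ are (up to a positive factor) the off-diagonal entries of the local stiffness matrix, and \emph{non-obtuseness of $T$ is exactly the statement that $c_{ij}\le 0$}, i.e.\ $-c_{ij}\ge 0$. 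Hence $\abs{\nabla\vec V}^2$ is a nonnegative-coefficient combination of the squared vertex-increments $\abs{\vec V(z_i)-\vec V(z_j)}^2$. Since $\pi_K$ is $1$-Lipschitz, $\abs{\vec W(z_i)-\vec W(z_j)} = \abs{\pi_K\vec U(z_i)-\pi_K\vec U(z_j)}\le \abs{\vec U(z_i)-\vec U(z_j)}$ for every edge, and combining with the sign of the coefficients yields $\abs{\nabla\vec W}\le\abs{\nabla\vec U}$ on $T$; the same monotonicity of Lipschitz constants under interpolation handles the refinement variant. Summing over $T\in\tria$ gives $\energy(\vec W)\le\energy(\vec U)$.

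It remains to turn ``$\le$'' into the desired contradiction. If $\vec U(\Omega)\not\subset K$ then, since $\vec U$ is continuous and $K$ closed, there is a simplex $T$ and a vertex $z_i$ with $\vec U(z_i)\notin K$ (here I use that $\vec U$ is piecewise affine, so its values on $\Omega$ are convex combinations of vertex values; if all vertex values lay in $K$ so would all of $\vec U(\Omega)$). For that $z_i$, $\pi_K\vec U(z_i)\ne \vec U(z_i)$, so the competitor $\vec W$ genuinely differs from $\vec U$; by uniqueness of the minimiser $\energy(\vec W) > \energy(\vec U)$, contradicting the inequality just proved. Therefore $\vec U(\Omega)\subset K$, and since also trivially $\vec U(\partial\Omega) = \vec G(\partial\Omega)\subset K$ while $\convexhull(\vec U(\partial\Omega)) = \convexhull(\vec G(\partial\Omega)) = K$, we get \eqref{eq:CHP}. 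The main obstacle I anticipate is the regularity/structure bookkeeping in the first paragraph — verifying that the truncated function is an admissible competitor whose energy one may legitimately compare (either via the interpolation step or via the one-half-space-at-a-time argument). The analytic core, namely $\abs{\nabla\vec W}\le\abs{\nabla\vec U}$, is essentially forced once one has the correct sign of the local stiffness coefficients on non-obtuse simplices.
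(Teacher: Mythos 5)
Your proposal is correct and follows the same overall strategy as the paper: replace $\vec U$ by the piecewise affine function whose nodal values are the projected values $\Pi_K\vec U(z)$, $z\in\nodes$ (your interpolant $I_\tria(\pi_K\circ\vec U)$ is exactly the paper's operator $\mcP_K$, which is defined nodally from the outset and thereby sidesteps your worry about $\pi_K\circ\vec U$ failing to be affine on each simplex), show that this competitor has the same boundary values and does not increase the energy, and conclude from uniqueness of the minimiser. The one genuine difference lies in how the central pointwise estimate $\abs{\nabla\mcP_K\vec U}\le\abs{\nabla\vec U}$ is obtained. You use the identity $\abs{\nabla\vec V}^2=-\sum_{i<j}\big(\nabla\phi_i\cdot\nabla\phi_j\big)\abs{\vec V(z_i)-\vec V(z_j)}^2$ (valid because $\sum_i\nabla\phi_i=0$ on each simplex), the sign condition $\nabla\phi_i\cdot\nabla\phi_j\le0$ coming from non-obtuseness, and the non-expansiveness of $\pi_K$; the paper instead proves the stronger statement $\nabla\vec V:\nabla\mcP_K\vec V\ge\abs{\nabla\mcP_K\vec V}^2$ directly from the variational inequality $(x-\Pi_Kx)\cdot(z-\Pi_Kx)\le0$ of the projection and then deduces your inequality by Cauchy--Schwarz. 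Your route is marginally more elementary, needing only the Lipschitz bound on $\pi_K$; the paper's buys the inner-product inequality, which is the form one wants when arguing through the Euler--Lagrange equation rather than the energy. Two cosmetic points: your claim of \emph{strict} energy decrease where $\vec U\notin K$ is neither guaranteed (monotonicity of $F(x,\cdot)$ is not assumed strict) nor needed, since uniqueness of the minimiser already forces $\mcP_K\vec U=\vec U$, as you correctly use at the end; and the contradiction framing is superfluous, because $\energy(\mcP_K\vec U)\le\energy(\vec U)$ together with uniqueness yields $\mcP_K\vec U=\vec U$ directly, whence $\vec U(\Omega)\subset K$.
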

This is the discrete analogue of the so called convex hull property of
vector valued minimisers; see~\cite{OttLM98,BilFuc02}. Note that the
convex hull property is the generalization of the maximum principle to
the vector valued case. Indeed, if $m=1$, then the convex hull
property reads as
\begin{align*}
  U(\Omega) = [\min U(\partial \Omega), \max U(\partial \Omega)].
\end{align*}

The problem above is presented as a minimising problem. If the energy
$\energy$ is \Frechet differentiable, then~\eqref{eq:prob} becomes
equivalent to finding a solution $\vec{U}\in \vec{G}+\VoT$ of the
Euler-Lagrange system
\begin{align}
  \label{eq:DJ}
  \dual{ D\energy(\vec{U})}{\vec{H}}=\int_\Omega a\left(x,\abs{\nabla
      \vec{U}(x)}\right)\,\nabla \vec{U}\colon \nabla \vec{H}\,{\rm
    d}x =0\quad\text{for all $\vec{H}\in\VoT$,} 
\end{align}
where $a(x,t)=F_t(x,t)/t$. Consequently, Theorem \ref{t:main} applies
to a large class of non-linear partial differential equations.

The research on discrete
maximum principles can be traced back to the early paper of Ciarlet
and Raviart \cite{CiarletRaviart:73} in the nineteen seventies.
Provided a non-obtuse triangulation, they established a discrete
maximum principle for scalar valued linear second order elliptic
problems on a two dimensional domain.   This result
was generalized to three dimensional domains in \cite{KrizekQun:95}.
For the Laplacian, the above mesh condition can be replaced by a
weaker, so called Delaunay condition. This condition only requires
that the sum of any pair of angles opposite a common side is less or
equal $\pi$; see \cite{StrangFix:73,Letniowski:92}.  Dr{\u{a}}g{\u{a}}nescu, Dupont,
and Scott \cite{DragDupScott:04} analyzed the failure of the discrete
maximum principle when the Delaunay condition is violated.  However,
according to \cite{Santos:82,KorotovKrizekNeittaanmaki:01} a discrete
maximum principle may hold even if both angles in such a pair are
greater than $\pi/2$ introducing additional restrictions involving a
larger neighborhood.  Some of these concepts are generalized to
anisotropic diffusion problems in \cite{LiHuang:10,Huang:11}.

Recently, discrete maximum principles are proved for quasi-linear
partial differential equations \cite{KaratsonKorotov:05,WangZhang:11},
which means in the case of \eqref{eq:DJ} that $ 0<\lambda \le
a(x,t)\le \Lambda<\infty$ for all $x,t\in\Omega\times\setR$.
J{\"u}ngel and Unterreitner \cite{JunUnt:05} proved a discrete maximum principle for
partial differential equations with non-linear lowest order term.

All aforementioned approaches
are based on the ellipticity and continuity of the problems in
order to allow for a Hilbert space setting. In particular, beside
\cite{WangZhang:11,JunUnt:05}  they
rely on the inversion of so called $M$-Matrices. 
As a consequence the results are basically restricted to energy
functionals with quadratic growth.

In contrast, our
approach is based on a projection into a suitable convex set and direct
properties of the energy functional.  Therefore, it does not rely
on linear structures and directly applies to general convex
non-linear, vector valued problems. As a drawback, we need point-wise
(resp. element-wise) properties of the finite element functions and
therefore cannot weaken the mesh restrictions to a Delaunay type condition.

The plan of the paper is as follows.
In Section \ref{sec:problem-setting} we introduce the finite element
framework and provide precise assumptions on the energy functional
from \eqref{eq:prob}. 
In Section \ref{sec:main} we prove the discrete convex hull property, Theorem
\ref{t:main}. We close the article in 
Section \ref{sec:extensions} discussing several
applications and extensions of the introduced theory.
In particular, we prove a maximum principle for non-linear scalar partial
differential equations with non-positive right hand side and 
show that the $p$-Laplace, 
as well as the mean curvature problem fit into the presented theory.
Moreover, we show
a discrete maximum principle for non-linear problems with lower order
terms using mass-lumping. Finally, we define and verify a strong
convex hull property, which generalizes the strong maximum principle
of scalar valued problems.

\renewcommand{\atop}[2]{\ensuremath{\genfrac{}{}{0pt}{}{#1}{#2}}}

\section{Problem Setting}
\label{sec:problem-setting}
In this section we shall introduce the
finite element setting and formulate the
precise assumptions on the non-linear energy.
For the sake of presentation we denote the Euclidean norms in $\Rn$, $n\ge2$,
 $\Rm$, $m\in\setN$, as well as the absolute value in $\setR$ by~$\abs{\cdot}$. Furthermore,  for
 $\vec{A}=[A_{ij}]_{\atop{i=1,\ldots,n}{j=1,\ldots,m}},\vec{B}=[B_{ij}]_{\atop{i=1,\ldots,n}{j=1,\ldots,m}}\in\setR^{n\times
   m}$ we define  the inner product
$\vec{A}:\vec{B}\definedas\sum_{\atop{i=1,\ldots,n}{j=1,\ldots,m}}A_{ij}B_{ij}$
and denote also by $\abs{\vec{A}}$ the corresponding Frobenius norm of $\vec{A}$.

\subsection{Finite Element Framework}
\label{sec:fe}

Let $\tria$ be a conforming partition of the polyhedral domain
$\Omega\subset\setR^n$ into closed
$n$-simplexes. To be more precise, we have
\begin{align*}
  \bar \Omega = \bigcup \{T\mid T\in\tria\}
\end{align*}
 and the intersection of two different elements in $\tria$ is either empty or a complete
 $k$-dimensional sub-simplex, 
 $0\le k < n$. 
 Let $\P_1(T)$ be the space of affine linear functions on
 $T\in\tria$ and define the space of first order  Lagrange finite elements 
 by
 \begin{align}
   \label{eq:VT}
   \VT\definedas \left\{ V\in C(\bar\Omega)\colon
     V_{|T}\in\P_1(T)~\text{for all}~T\in\tria\right\}.
 \end{align}
 This space is spanned by the so called nodal Lagrange basis.
 To be more precise, let $\nodes$ be the set of 
 vertices of all elements in $\tria$ and let $\nodes_0\definedas
 \nodes\cap\Omega$ be the subset of vertices inside $\Omega$.
 For $z\in\nodes$ the corresponding Lagrange hat function $\phi_z$ is
 uniquely defined by $\phi_z(y)=\delta_{zy}$ for all $y\in\nodes$,
 where we make use of the Kronecker symbol, \ie $\delta_{zy}=1$
 whenever $z=y$ and $\delta_{zy}=0$ else. We denote the set of element
 sides  of $\tria$ by $\sides$. For two sides of  $S_1,S_2\in\sides$ of a
 simplex $T\in\tria$ we define the angle between $S_1$ and $S_2$ 
 by $\angle(S_1,S_2)\definedas\pi-\angle(\vec{n}_1,\vec{n}_2)$, where
 $\angle(\vec{n}_1,\vec{n}_2)$ is the angle in between the normals
 $\vec{n}_1$ and $\vec{n}_2$ pointing inside $T$ from $S_1$ and $S_2$,
 respectively.

 In the following we introduce non-obtuse meshes, which play a crucial
 role in the analysis of discrete 
 maximum principles; see e.g. 
 \cite{CiarletRaviart:73,KrizekQun:95,DragDupScott:04}.
 \begin{definition}[non-obtuseness]\label{df:non-obtuse}
   A simplex $T\in\tria$ is called non-obtuse 
   if the angles between any two of its sides 
   are less or equal $\frac\pi2$. 

   The conforming triangulation $\tria$ of $\Omega$ is called 
     non-obtuse if all simplexes $T\in\tria$ are non-obtuse.
 \end{definition}
 We remark that in higher
 dimensions $n\ge3$ the construction and refinement of non-obtuse
 partitions is  a non trivial task. The
 interested reader is e.g. 
 referred to \cite{KrizekQun:95,KorotovKrizek:2005,KorotovKrizek:2011}. 

 One can characterize non-obtuse
 meshes by means of the nodal Lagrange basis functions. Indeed, 
 observing that the gradient of a Lagrange basis function $\phi_z$,
 $z\in\nodes\cap T$  
 on a simplex $T\in\tria$ points in the direction of the normal of
 the side opposite $z$, 
 one can easily show that the following condition on the basis
 functions  is 
 equivalent to the geometric mesh property in Definition
 \ref{df:non-obtuse}; compare also with \cite{CiarletRaviart:73,KrizekQun:95}. 
 \begin{proposition}\label{p:non-obtuse}
  An $n$-simplex $T\in\tria$ is 
     non-obtuse if and only if
   \begin{align*}
     \nabla \phi_{z|T}\cdot\nabla\phi_{y|T}\le 0 \qquad\text{for
       all}~z,y\in T\cap\nodes~\text{with}~z\neq y.
   \end{align*}
 \end{proposition}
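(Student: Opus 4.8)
The plan is to express both conditions through the constant gradients $\vec g_z\definedas\nabla\phi_{z|T}$, $z\in T\cap\nodes$, and to compare them via the cosine of the enclosed angle. First I would fix a non-degenerate simplex $T\in\tria$ with vertex set $T\cap\nodes=\{z_0,\dots,z_n\}$ and recall that on $T$ the hat function $\phi_{z_i}$ coincides with the $i$-th barycentric coordinate: it is affine, so $\vec g_{z_i}$ is constant on $T$; it vanishes on the side $S_i\in\sides$ opposite $z_i$ and is positive in the interior of $T$. Hence $\vec g_{z_i}$ is orthogonal to $S_i$, is non-zero, and points from $S_i$ into $T$ — this is the observation already noted before the proposition. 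In particular $\vec n_i\definedas\vec g_{z_i}/\abs{\vec g_{z_i}}$ is exactly the inner unit normal of $S_i$ appearing in the definition of $\angle(S_i,S_j)$.

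Next I would use the definition of the angle between two sides to compute, for $i\neq j$,
\[
  \cos\angle(S_i,S_j)=\cos\!\big(\pi-\angle(\vec n_i,\vec n_j)\big)=-\cos\angle(\vec n_i,\vec n_j)=-\frac{\vec g_{z_i}\cdot\vec g_{z_j}}{\abs{\vec g_{z_i}}\,\abs{\vec g_{z_j}}}.
\]
Because $\abs{\vec g_{z_i}}\,\abs{\vec g_{z_j}}>0$, the sign of $\cos\angle(S_i,S_j)$ is opposite to the sign of $\vec g_{z_i}\cdot\vec g_{z_j}$; therefore $\angle(S_i,S_j)\le\frac\pi2$ holds if and only if $\vec g_{z_i}\cdot\vec g_{z_j}\le 0$.

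To conclude, I would note that $\{z_i,z_j\}\mapsto\{S_i,S_j\}$ is a bijection between the unordered pairs of distinct vertices of $T$ and the unordered pairs of distinct sides of $T$, so the displayed equivalence, quantified over all such pairs, says precisely that $T$ is non-obtuse (Definition~\ref{df:non-obtuse}) if and only if $\vec g_z\cdot\vec g_y\le 0$ for all distinct $z,y\in T\cap\nodes$. Taking the conjunction over all $T\in\tria$ then yields the statement for the whole triangulation.

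The only point requiring care is the orientation bookkeeping: one has to verify that $\nabla\phi_{z_i|T}$ is the \emph{inner} (not outer) normal of $S_i$, which is exactly why the term $\pi-(\cdot)$ in the definition of $\angle(S_i,S_j)$ appears and why the inequality sign gets reversed when passing from the dihedral angle to the inner product; the remaining steps are routine computations with barycentric coordinates.
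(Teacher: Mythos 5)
Your argument is correct and follows exactly the route the paper indicates: it identifies $\nabla\phi_{z_i|T}$ with a positive multiple of the inner normal of the side opposite $z_i$ and translates the angle condition of Definition~\ref{df:non-obtuse} into the sign of the inner product, which is precisely the observation the authors state before Proposition~\ref{p:non-obtuse} and leave to the reader. Your version merely makes the orientation bookkeeping explicit, which is a sound completion of the paper's sketch.
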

 
 A direct consequence is the following corollary.
\begin{corollary}
  \label{cor:non-obtuse} 
  Let $\tria$ be a conforming triangulation of $\Omega$. Then 
  $\tria$ is non-obtuse if and only if
  \begin{align*}
    \nabla \phi_z \cdot \nabla \phi_y \leq 0 \quad\text{a.\,e. in $\Omega$\quad
      for all}~z,y\in\nodes~\text{with}~z\neq y.
  \end{align*}
\end{corollary}

The subspace of functions in $\VT$ vanishing at the boundary
$\partial\Omega$ is denoted by 
\begin{align*}
  \VoT\definedas\left\{V\in\VT\colon
      V_{|\partial\Omega}=0\right\}=\spn\left\{\phi_z\colon z\in\nodes_0\right\}.
\end{align*}
For $m\in\setN$, the $m$-dimensional vector valued finite element
spaces corresponding 
to $\VT$ and $\VoT$
are given by 
\begin{align*}
  \VT^m\definedas \bigtimes_{i=1}^m\VT\qquad\text{and}\qquad
  \VoT^m\definedas \bigtimes_{i=1}^m\VoT, 
\end{align*}
respectively. Clearly, for $\vec{G}\in\VT^m$, the affine subspace of functions that coincide
with $\vec{G}$ on $\partial\Omega$ is $\vec{G}+\VoT^m$. For
$\omega\subset\bar\Omega$ we define $\vec{V}(\omega)\definedas
\{\vec{V}(x)\colon x\in\omega\}\subset\setR^m$ as the set of function values of
$\vec{V}$ on $\omega$.


\subsection{Energy Minimisation}
\label{sec:energy-minimisation}
In this section we formulate standard conditions on the energy
$\energy$ in order to
guarantee the unique solvability of
\eqref{eq:prob}. To this end we first ensure that $\energy(\vec{V})$
is well defined for all $\vec{V}\in\VT^m$ assuming that
$F\colon\Omega\times\setR_\ge\to\setR$ defines a Nemyckii operator 
$\mathcal{F}:\VT^m\to L^1(\Omega)$ setting
\begin{align*}
  (\mathcal{F}\vec{V})(x)\mapsto F(x,\abs{\nabla\vec{V}(x)}), \quad x\in\Omega.
\end{align*}
This can be achieved by the following conditions.

\begin{assumption}[Nemyckii Operator] \label{ass:Nemyckii} We assume  that
$F\colon\Omega\times\setR_\ge\to\setR$  
  \begin{enumerate}[leftmargin=1cm,itemsep=1ex,label=(\arabic{*})]
  \item\label{i:carat} is a {\em Carath{\'e}odory function}, \ie $F(\cdot,t)$ is
    measurable in $\Omega$ for each $t\in\setR_\ge$ and $F(x,\cdot)$
    is continuous on $\setR_\ge$ for almost every $x\in\Omega$;
  \item \label{i:growth}satisfies the {\em growth condition}
    \begin{align*}
      \abs{F(x,t)}\leq \alpha(x)+\gamma(t),\quad (x,t)\in\Omega\times
    \setR_\ge,
    \end{align*}
    for some $\alpha\in L^1(\Omega)$ and continuous
    $\gamma:\setR_\ge\to\setR_\ge$. 
  \end{enumerate}
\end{assumption}

\begin{proposition}\label{prop:cont}Suppose that
  $F\colon\Omega\times\setR_\ge\to\setR$ satisfies Assumptions
  \ref{ass:Nemyckii}. Then the energy functional $\energy:\VT^m\to
  L^1(\Omega)$ defined in \eqref{eq:prob} is continuous.
\end{proposition}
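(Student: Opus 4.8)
The plan is to run the standard Nemyckii\nobreakspace continuity argument, adapted to the finite element setting, and reduce everything to Lebesgue's dominated convergence theorem; the only genuinely used structural fact is that $\VT^m$ is finite-dimensional. First I would observe that, since $\VT^m$ is a normed (hence metrizable) space, it suffices to prove sequential continuity. So let $\vec{V}_k\to\vec{V}$ in $\VT^m$. Because all norms on the finite-dimensional space $\VT^m$ are equivalent, I may read this convergence as convergence of the nodal values. On each simplex $T\in\tria$ the gradient $\nabla(\cdot)_{|T}=\sum_{z\in T\cap\nodes}(\cdot)(z)\,\nabla\phi_{z|T}$ is a fixed linear map of the nodal values taking values in the (piecewise) constants, so nodal convergence forces $\nabla\vec{V}_k\to\nabla\vec{V}$ in $L^\infty(\Omega)$; in particular $\abs{\nabla\vec{V}_k}\to\abs{\nabla\vec{V}}$ uniformly on $\Omega$, and there is $M<\infty$ with $\abs{\nabla\vec{V}_k}\le M$ a.e.\ in $\Omega$ for all $k$.

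Next I would extract pointwise convergence of the integrands. Let $N\subset\Omega$ be the null set outside of which $t\mapsto F(x,t)$ is continuous, as in Assumption~\ref{ass:Nemyckii}~\ref{i:carat}. For every $x\in\Omega\setminus N$ the convergence $\abs{\nabla\vec{V}_k(x)}\to\abs{\nabla\vec{V}(x)}$ together with continuity of $F(x,\cdot)$ yields $F(x,\abs{\nabla\vec{V}_k(x)})\to F(x,\abs{\nabla\vec{V}(x)})$, that is, $\mathcal{F}\vec{V}_k\to\mathcal{F}\vec{V}$ pointwise a.e.\ in $\Omega$.

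Then comes the domination step, which is exactly where the uniform gradient bound $M$ is used: by the growth condition~\ref{i:growth},
\[
  \abs{(\mathcal{F}\vec{V}_k)(x)}\le \alpha(x)+\gamma(\abs{\nabla\vec{V}_k(x)})\le \alpha(x)+\max_{s\in[0,M]}\gamma(s)=:g(x)
\]
for a.e.\ $x\in\Omega$ and all $k$, where $g\in L^1(\Omega)$ since $\alpha\in L^1(\Omega)$, the maximum is finite by continuity of $\gamma$, and $\Omega$ is bounded. Dominated convergence then gives $\mathcal{F}\vec{V}_k\to\mathcal{F}\vec{V}$ in $L^1(\Omega)$, and integrating yields $\energy(\vec{V}_k)\to\energy(\vec{V})$, which proves the claim.

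I do not expect a real obstacle here; the single point that needs care is that the dominating function $g$ must be independent of $k$, and this is precisely what finite-dimensionality provides through the uniform bound $M$ on the gradients. (On an infinite-dimensional space this step breaks down, and under suitable convexity of $F$ one would only recover weak lower semicontinuity rather than continuity.)
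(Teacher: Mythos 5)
Your proof is correct and follows essentially the same route as the paper: finite-dimensionality gives a uniform $L^\infty$ bound, the Carath\'eodory property gives pointwise convergence of the integrands, the growth condition supplies an integrable dominating function, and Lebesgue's dominated convergence theorem concludes. If anything, you are slightly more careful than the paper's sketch in tracking that the relevant quantity is $\abs{\nabla\vec{V}_k}$ rather than $\vec{V}_k$ itself and in writing out the dominating function $\alpha+\max_{s\in[0,M]}\gamma(s)$ explicitly.
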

\begin{proof}
  Let $\{\vec{V}_k\}_{k\in\setN}\subset\VT^m$, $\vec{V}\in\VT^m$ such that
  $\vec{V}_k\to\vec{V}$ in $\VT^m$ as $k\to\infty$.
  The space $\VT^m$ is finite
  dimensional and hence all norms on $\VT^m$ are equivalent. 
  Therefore, we can conclude that the sequence
  $\{\vec{V}_k\}_{k\in\setN}$ is uniformly bounded in
  $L^\infty(\Omega)^m$. Assumption
  \ref{ass:Nemyckii}\ref{i:growth} thus implies for some $C>0$ that 
  $|F(\cdot,\vec{V}_k(\cdot))|\le \alpha(\cdot)+ C\in L^1(\Omega)$ and  
  from Assumption \ref{ass:Nemyckii}\ref{i:carat} it follows that
  $F(\cdot,\vec{V}_k(\cdot))\to F(\cdot,\vec{V}(\cdot))$ almost
  everywhere in $\Omega$. Hence the claim is a consequence of
  Lebesgue's dominated convergence theorem. 
\qed\end{proof}

The following conditions are linked to the existence of a unique
minimiser of~$\energy$.
\begin{assumption}
  \label{ass:co}
  \begin{subequations}
    We assume that $F:\Omega\times\setR_\ge\to\setR$ is strictly
    convex in its second argument, \ie it holds for all
    $\theta\in(0,1)$ and $s,t\ge 0$ with $s\not=t$, that
    \begin{align}\label{eq:convexity}
      F\big(\cdot, \theta s +(1-\theta)t\big)< \theta
      F(\cdot,s)+(1-\theta)F(\cdot,t)\qquad\text{a.e. in $\Omega$.}
    \end{align}

    Moreover, assume that $F$ satisfies the coercivity condition
    \begin{align}\label{eq:coercivity}
      F(\cdot,t)\ge g(t)\qquad\text{for
        all}~t\ge0~\text{a.e. in}~\Omega, 
    \end{align}
    with some continuous monotone function  $g:\setR_\ge\to\setR$ such
    that $g(t)\to \infty$ as $t\to \infty$.
  \end{subequations}
\end{assumption}

Now we are prepared to prove the existence of a unique solution to \eqref{eq:prob}.
\begin{proposition}\label{p:solution}
  Let $\vec{G}\in\VT^m$ and suppose Assumptions \ref{ass:Nemyckii} and
  \ref{ass:co}. 
  Then there exists a unique $\vec{U}\in\vec{G}+\VoT^m$ such that 
  \begin{align*}
    \energy(\vec{U})=\min\left\{\energy(\vec{V})\colon\vec{V}\in\vec{G}+\VoT^m\right\}.
  \end{align*}
\end{proposition}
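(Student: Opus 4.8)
The plan is to apply the direct method of the calculus of variations, which here is especially simple because $\vec{G}+\VoT^m$ is \emph{finite dimensional}. Continuity of $\energy$ on $\VT^m$, and hence on the affine subspace $\vec{G}+\VoT^m$, is already provided by Proposition~\ref{prop:cont}. It therefore suffices to establish coercivity of $\energy$ on $\vec{G}+\VoT^m$, \ie $\energy(\vec{V})\to\infty$ as $\|\vec{V}\|\to\infty$ within this set: granting this, for any fixed $\vec{V}_0\in\vec{G}+\VoT^m$ the sublevel set $\set{\vec{V}\in\vec{G}+\VoT^m : \energy(\vec{V})\le\energy(\vec{V}_0)}$ is bounded (by coercivity) and closed (by continuity), hence compact by Heine--Borel, so $\energy$ attains its infimum there; this minimiser is global since $\energy$ exceeds $\energy(\vec{V}_0)$ outside the sublevel set. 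Uniqueness will then follow from strict convexity of $\energy$.

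For coercivity, write $\vec{V}=\vec{G}+\vec{W}$ with $\vec{W}\in\VoT^m$. On the finite dimensional space $\VoT^m$ the quantity $\vec{W}\mapsto\|\nabla\vec{W}\|_{L^\infty(\Omega)}$ is a norm --- a vanishing gradient forces the continuous piecewise affine $\vec{W}$ to be (locally) constant, and the zero boundary values then give $\vec{W}=0$ --- so by equivalence of norms $\|\nabla\vec{W}\|_{L^\infty(\Omega)}\to\infty$, and consequently $\|\nabla\vec{V}\|_{L^\infty(\Omega)}\ge\|\nabla\vec{W}\|_{L^\infty(\Omega)}-\|\nabla\vec{G}\|_{L^\infty(\Omega)}\to\infty$ as well. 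Since the mesh is fixed, $h_{\min}:=\min_{T\in\tria}\abs{T}>0$, and since $\nabla\vec{V}$ is elementwise constant while $g$ is monotone, testing on the element $T^\ast$ on which $\abs{\nabla\vec{V}}$ attains its maximum and using the coercivity condition~\eqref{eq:coercivity} yields
\begin{align*}
  \energy(\vec{V})\ \ge\ \int_\Omega g\big(\abs{\nabla\vec{V}(x)}\big)\,{\rm d}x\ \ge\ h_{\min}\,g\big(\|\nabla\vec{V}\|_{L^\infty(\Omega)}\big)-C,
\end{align*}
with $C$ depending only on $\abs{\Omega}$, $h_{\min}$ and $g(0)$; the right-hand side tends to $\infty$ because $g(t)\to\infty$.

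For uniqueness I would show that $\energy$ is strictly convex on $\vec{G}+\VoT^m$. Fix distinct $\vec{V}_0,\vec{V}_1\in\vec{G}+\VoT^m$ and $\theta\in(0,1)$, and set $\vec{V}_\theta:=(1-\theta)\vec{V}_0+\theta\vec{V}_1$. Pointwise, convexity of the Frobenius norm together with the monotonicity of $F(x,\cdot)$ gives $F\big(x,\abs{\nabla\vec{V}_\theta(x)}\big)\le F\big(x,(1-\theta)\abs{\nabla\vec{V}_0(x)}+\theta\abs{\nabla\vec{V}_1(x)}\big)$. If $\abs{\nabla\vec{V}_0(x)}\neq\abs{\nabla\vec{V}_1(x)}$, the strict convexity~\eqref{eq:convexity} makes the latter strictly less than $(1-\theta)F(x,\abs{\nabla\vec{V}_0(x)})+\theta F(x,\abs{\nabla\vec{V}_1(x)})$; if instead $\abs{\nabla\vec{V}_0(x)}=\abs{\nabla\vec{V}_1(x)}$ but $\nabla\vec{V}_0(x)\neq\nabla\vec{V}_1(x)$, then strict convexity of the Euclidean ball gives $\abs{\nabla\vec{V}_\theta(x)}<\abs{\nabla\vec{V}_0(x)}$, and since a monotone strictly convex function of one variable is strictly increasing, the value at $\vec{V}_\theta$ is again strictly smaller. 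Because $\vec{V}_0-\vec{V}_1\in\VoT^m\setminus\set{0}$ has a non-vanishing gradient on a set of positive measure, integrating the pointwise inequality gives $\energy(\vec{V}_\theta)<(1-\theta)\energy(\vec{V}_0)+\theta\energy(\vec{V}_1)$. In particular, two distinct minimisers would yield, via their midpoint (which again lies in $\vec{G}+\VoT^m$), a value strictly below the minimum --- a contradiction.

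The step I expect to require the most care is the coercivity estimate: $g$ may take negative values and is not assumed to grow at any prescribed rate, so one must exploit that the triangulation is fixed --- whence the elementwise-constant gradient cannot concentrate on arbitrarily small simplices --- in order to convert ``$\|\nabla\vec{V}\|$ large'' into ``$\energy(\vec{V})$ large''. A secondary subtlety is the case $\abs{\nabla\vec{V}_0(x)}=\abs{\nabla\vec{V}_1(x)}$ with $\nabla\vec{V}_0(x)\neq\nabla\vec{V}_1(x)$ in the convexity argument, where the monotonicity of $F$ alone is insufficient and one genuinely invokes strict convexity of both $F$ and the Euclidean norm.
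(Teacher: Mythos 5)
Your proof follows essentially the same route as the paper's: the direct method in a finite dimensional space (the paper extracts a convergent subsequence from an infimal sequence, you use compact sublevel sets --- the same mechanism), with continuity supplied by Proposition~\ref{prop:cont} and uniqueness from strict convexity. You supply details the paper only asserts, in particular the quantitative coercivity estimate via the smallest element measure (needed because $g$ may be negative and has no prescribed growth) and the norm-equivalence argument on $\VoT^m$; these are correct.

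One point deserves attention: your uniqueness argument invokes the monotonicity of $F(x,\cdot)$, which is Assumption~\ref{ass:monotone} and is \emph{not} among the hypotheses of Proposition~\ref{p:solution} (the proposition assumes only Assumptions~\ref{ass:Nemyckii} and~\ref{ass:co}). This is not a defect you introduced: strict convexity of $t\mapsto F(x,t)$ on $\setR_\ge$ alone does not make $\vec{A}\mapsto F(x,\abs{\vec{A}})$ convex (consider $F(x,t)=(t-1)^2$, for which distinct minimisers can genuinely occur), so the paper's one-line claim that ``the strict convexity \eqref{eq:convexity} implies the uniqueness'' is itself incomplete without monotonicity. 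Your write-up surfaces this hidden hypothesis --- but you should either add Assumption~\ref{ass:monotone} to the statement you are proving or explicitly note that you are using it. With that caveat the argument is complete; in particular your observation that a non-decreasing strictly convex function is automatically strictly increasing correctly settles the case $\abs{\nabla\vec{V}_0(x)}=\abs{\nabla\vec{V}_1(x)}$ with $\nabla\vec{V}_0(x)\neq\nabla\vec{V}_1(x)$, which monotonicity alone would not.
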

\begin{proof}
  The proof of this assertion is fairly standard.  However, for the
  sake of a self-consistent presentation we sketch the proof here.
  Thanks to \eqref{eq:coercivity} it follows that the energy
  functional is bounded from below and that an infimal sequence
  $\{\vec{V}_k\}_{k\in\setN}\subset\vec{G}+\VoT^m$ is norm-wise
  uniformly bounded by some constant $C>0$. Since $\VT^m$ is of finite
  dimension, $\{\vec{V}_k\}_{k\in\setN}$ is pre-compact in
  $\VT^m$. Consequently, there 
  exists a converging subsequence
  $\vec{V}_{k_\ell}\to \vec{U}$ for some $\vec{U}\in\vec{G}+\VoT^m$ as
  $\ell\to\infty$.  By the continuity of $\energy$, Proposition
  \ref{prop:cont}, it follows that $\vec{U}$ is a minimiser of
  $\energy$. Finally, the strict convexity \eqref{eq:convexity}
  implies the uniqueness of the minimiser.
\qed\end{proof}

The following assumption is crucial in order to prove the discrete
maximum principle (Theorem 
\ref{t:main}). 
\begin{assumption}[Monotonicity]\label{ass:monotone}
  We assume that the mapping $F(x,\cdot):\setR_\ge\to\setR$ is
  monotone increasing for almost every $x\in\Omega$. 
\end{assumption}

Note that $\VT\subset L^\infty(\Omega)$ is finite
dimensional. Therefore, in Assumptions~\ref{ass:Nemyckii}
and~\ref{ass:co} we do not need to resort to common growth or
coercivity conditions like $\gamma(t)=c\abs{t}^s$ or $g(t)=c |t|^r$ for some
$c>0$ and $s>1$. As a consequence, the presented theory can be applied e.g. 
to the mean curvature problem; compare also with Remark~\ref{rem:meanc} below.

\section{Proof of the Discrete Convex Hull Property, Theorem \ref{t:main}}
\label{sec:main}

In the following let $K \subset \Rm$ denote a convex, closed
subset. We denote by $\Pi_K:\setR^m\to K$ the orthogonal projection
with respect to the Euclidean inner product of $\setR^m$. In other
words
\begin{align}\label{eq:PiK}
  \Pi_K x := \argmin_{y \in K} \abs{x-y}.
\end{align}

The next lemma is an immediate consequence of the convexity of
the set~$K$.
\begin{lemma}
  \label{lem:TKdir2}
  Let $K\subset\Rm$ be a closed and convex. Then 
  for all $x \in \Rm$ and $z \in K$ we have
  \begin{align*}
    (x- \Pi_K x) \cdot (z-\Pi_K x ) &\leq 0.
  \end{align*}
\end{lemma}
\begin{proof}
  If $x \in K$, then $x= \Pi_K x$ and the estimate is obvious.  Let
  us assume that $x \not\in K$. We denote by $L_x$ the hyper-plane
  which 
  touches $K$ in $\Pi_Kx$ and is orthogonal to $x - \Pi_K x$. 
  Since $K$ is convex, $L_x$ separates $K$ from $x$. This implies
  that the angle between $(\Pi_K x-x)$ and $(\Pi_K x - z)$ is greater
  or equal $\pi/2$; compare with Figure~\ref{fig:TKdir}. This proves the assertion. 
  \begin{figure}
    \centering
    \resizebox{0.6\textwidth}{!}{\input{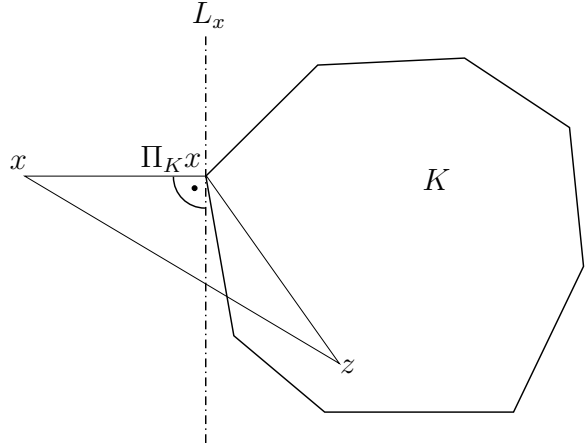}}
    \caption{The projection $\Pi_K$}
    \label{fig:TKdir}
  \end{figure}
\qed\end{proof}

We define a projection operator $\mcP_K:\VT^m\to\VT^m$ with 
$(\mcP_K\vec{V})(\Omega)\subset K$ by setting 
\begin{align}\label{df:mcP}
  (\mcP_K\vec{V})(z)\definedas \Pi_K\vec{V}(z),\qquad z\in\nodes.
\end{align}
We next list some elementary properties of this projection
operator. In particular, since $K$ is convex and $\VT^m$ contains only
piecewise linear functions over $\tria$, we have 
\begin{subequations}\label{eq:mcProp}
  \begin{align}
    \label{eq:cK}
    \mcP_K\vec{V}:\Omega\to K\qquad\text{for all}~\vec{V}\in\VT^m.
  \end{align}
  If $\vec{G}(\partial \Omega)\subset K$, for some $\vec{G}\in\VT^m$, it
  follows that the boundary values are preserved, \ie
  \begin{align}
    \label{eq:GcK}
    \mcP_K\vec{V}\in\vec{G}+\VoT^m\qquad\text{for
      all}~\vec{V}\in\vec{G}+\VoT^m.
  \end{align}
\end{subequations}
The following property of $\mcP_K$ is the key estimate in the proof of
the convex hull property.
\begin{lemma}
  \label{lem:pos}
  Let $\tria$ be a non-obtuse conforming triangulation of $\Omega$
  and let $K\subset\Rm$ be a closed and convex set. Then for
  $\vec{V}\in\VT^m$ we have
  \begin{alignat*}{2}
    \nabla \vec{V}:\nabla \mcP_K\vec{V} &\geq \abs{\nabla
      \mcP_K\vec{V}}^2 &\qquad&\text{a.\,e. in}~\Omega, 
    \intertext{and}
    \abs{\nabla \vec{V}} &\geq \abs{\nabla \mcP_K
      \vec{V}}&\qquad&\text{a.\,e. in}~\Omega.
  \end{alignat*}
\end{lemma}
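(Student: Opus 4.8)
The plan is to work locally on a fixed simplex $T\in\tria$, where all the functions involved are affine, hence their gradients are constant matrices on $T$. Write $\vec V_{|T}=\sum_{z\in T\cap\nodes}\vec V(z)\,\phi_{z|T}$ and $(\mcP_K\vec V)_{|T}=\sum_{z\in T\cap\nodes}\Pi_K\vec V(z)\,\phi_{z|T}$, so that $\nabla\vec V_{|T}=\sum_z \vec V(z)\otimes\nabla\phi_{z|T}$ and $\nabla(\mcP_K\vec V)_{|T}=\sum_z \Pi_K\vec V(z)\otimes\nabla\phi_{z|T}$, where $\vec a\otimes\vec b$ denotes the matrix with entries $a_i b_j$. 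Using the identity $(\vec a\otimes\vec b):(\vec c\otimes\vec d)=(\vec a\cdot\vec c)(\vec b\cdot\vec d)$, I would expand the two relevant Frobenius inner products on $T$ as
\begin{align*}
  \nabla\vec V:\nabla\mcP_K\vec V
  &=\sum_{z,y\in T\cap\nodes}\big(\vec V(z)\cdot\Pi_K\vec V(y)\big)\,\big(\nabla\phi_{z|T}\cdot\nabla\phi_{y|T}\big),\\
  \abs{\nabla\mcP_K\vec V}^2
  &=\sum_{z,y\in T\cap\nodes}\big(\Pi_K\vec V(z)\cdot\Pi_K\vec V(y)\big)\,\big(\nabla\phi_{z|T}\cdot\nabla\phi_{y|T}\big).
\end{align*}
Subtracting, the quantity $(\nabla\vec V-\nabla\mcP_K\vec V):\nabla\mcP_K\vec V$ equals $\sum_{z,y}\big((\vec V(z)-\Pi_K\vec V(z))\cdot\Pi_K\vec V(y)\big)\,(\nabla\phi_{z|T}\cdot\nabla\phi_{y|T})$.

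The key algebraic trick is that $\sum_{z\in T\cap\nodes}\nabla\phi_{z|T}=\nabla\big(\sum_z\phi_{z|T}\big)=\nabla 1=0$ on $T$, since the hat functions form a partition of unity. Hence for each fixed $z$ we may replace $\Pi_K\vec V(y)$ in the inner sum by $\Pi_K\vec V(y)-\Pi_K\vec V(z)$ without changing the value (the correction term is $(\,\cdot\,)\sum_y\nabla\phi_{z|T}\cdot\nabla\phi_{y|T}$, and $\sum_y\nabla\phi_{y|T}=0$). This yields
\begin{align*}
  (\nabla\vec V-\nabla\mcP_K\vec V):\nabla\mcP_K\vec V
  =-\sum_{z\neq y}\underbrace{\big((\vec V(z)-\Pi_K\vec V(z))\cdot(\Pi_K\vec V(z)-\Pi_K\vec V(y))\big)}_{\le 0\text{ by Lemma~\ref{lem:TKdir2}}}\ \underbrace{(-\nabla\phi_{z|T}\cdot\nabla\phi_{y|T})}_{\ge 0\text{ by Prop.~\ref{p:non-obtuse}}},
\end{align*}
where the diagonal terms $z=y$ drop out. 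For each ordered pair $z\neq y$, Lemma~\ref{lem:TKdir2} applied with $x=\vec V(z)$ and the point $\Pi_K\vec V(y)\in K$ gives that the first bracket is $\le 0$, and non-obtuseness of $T$ (Proposition~\ref{p:non-obtuse}) gives $\nabla\phi_{z|T}\cdot\nabla\phi_{y|T}\le 0$, so each summand is $\le 0$ and the overall sum, with its leading minus sign, is $\ge 0$. This proves $\nabla\vec V:\nabla\mcP_K\vec V\ge\abs{\nabla\mcP_K\vec V}^2$ on $T$, hence a.e. in $\Omega$.

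The second inequality follows from the first by Cauchy--Schwarz for the Frobenius inner product: $\abs{\nabla\mcP_K\vec V}^2\le\nabla\vec V:\nabla\mcP_K\vec V\le\abs{\nabla\vec V}\,\abs{\nabla\mcP_K\vec V}$, and dividing by $\abs{\nabla\mcP_K\vec V}$ (the estimate being trivial where this vanishes) yields $\abs{\nabla\vec V}\ge\abs{\nabla\mcP_K\vec V}$. I expect the main obstacle to be purely bookkeeping: getting the tensor-product expansion and the partition-of-unity cancellation written cleanly, and making sure the sign in Lemma~\ref{lem:TKdir2} is applied to the correct pair of vectors (note it is $(\vec V(z)-\Pi_K\vec V(z))\cdot(\Pi_K\vec V(y)-\Pi_K\vec V(z))\le 0$, i.e. with $\Pi_K\vec V(z)$ as the base point of both factors, which is exactly the form in which the lemma is stated). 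No genuine analytic difficulty arises since everything is elementwise affine.
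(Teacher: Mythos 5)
Your argument is correct and follows essentially the same route as the paper: expand the Frobenius product on a fixed simplex, use the partition of unity $\sum_z\nabla\phi_{z|T}=0$ to pass to differences of nodal values, and then combine Lemma~\ref{lem:TKdir2} with the non-obtuseness sign condition termwise, finishing the second inequality by Cauchy--Schwarz (the paper bounds $\nabla\vec V:\nabla\mcP_K\vec V$ from below directly rather than showing the difference is nonnegative, but this is the identical computation rearranged). The only blemish is a self-cancelling sign slip in your display: the right-hand side should read $\sum_{z\neq y}\big((\vec V(z)-\Pi_K\vec V(z))\cdot(\Pi_K\vec V(y)-\Pi_K\vec V(z))\big)\big(\nabla\phi_{z|T}\cdot\nabla\phi_{y|T}\big)$ with no leading minus, each summand being a product of two non-positive factors and hence nonnegative --- as your own closing parenthetical about the correct orientation of Lemma~\ref{lem:TKdir2} already indicates.
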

\begin{proof}
  We prove the estimates on a fixed $n$-simplex $T\in\tria$. 
  Let $z_0, \dots, z_n$ denote the vertices of  $T$ 
  and let $\phi_0,
  \dots, \phi_n$ be the corresponding local Lagrange basis functions. Then
  \begin{align*}
    \vec{V}_{|T} &= \sum_{i=0}^n \vec{V}(z_i) \phi_{i|T}
  \end{align*}
  and
  \begin{align*}
    \nabla \vec{V}_{|T} : \nabla (\mcP_K \vec{V})_{|T} &= \sum_{i=0}^n \sum_{k=0}^n
    \big( \vec{V}(z_i) \cdot \Pi_K \vec{V}(z_k) \big)\, \big(\nabla
    \phi_{i|T} \cdot \nabla \phi_{k|T}\big)
  \end{align*}
  Using $\nabla \phi_0 + \nabla \phi_1 + \dots +\nabla \phi_n = \nabla
  1 = 0$ on $T$ we conclude
  \begin{align}\label{eq:VPV}
     \nabla \vec{V}_{|T} : \nabla (\mcP_K \vec{V})_{|T} &= \sum_{i=0}^n \sum_{k
      \not=i} \vec{V}(z_i) \cdot \big(\Pi_K \vec{V}(z_k) -\Pi_K
    \vec{V}(z_i) \big)\, \nabla \phi_{i|T}\cdot  \nabla \phi_{k|T}.
  \end{align}
  Lemma~\ref{lem:TKdir2} yields
  \begin{align*}
    \big( \vec{V}(z_i) - \Pi_K \vec{V}(z_i)\big) \cdot(
    \Pi_K \vec{V}(z_k)-\Pi_K \vec{V}(z_i) \big) \leq 0, 
  \end{align*}
  and thus
  \begin{align*}
    \vec{V}(z_i) \cdot \big(\Pi_K \vec{V}(z_k) -\Pi_K \vec{V}(z_i) \big)
    \leq \Pi_K \vec{V}(z_i) \cdot \big(\Pi_K \vec{V}(z_k) -\Pi_K
    \vec{V}(z_i) \big).
  \end{align*}
  Since $T$ is non-obtuse we have by Proposition \ref{p:non-obtuse} that
  $\nabla \phi_{i|T} \cdot\nabla \phi_{k|T} \leq 0$
  for all $i \neq k$. Consequently, we arrive at
  \begin{align*}
    \nabla \vec{V}_{|T} : \nabla (\mcP_K \vec{V})_{|T} &\geq \sum_{i=0}^n \sum_{k
      \neq i} \big( \Pi_K \vec{V}(z_i) \cdot(\Pi_K \vec{V}(z_k) -\Pi_K
    \vec{V}(z_i) \big)\, \big(\nabla \phi_{i|T} \cdot \nabla \phi_{k|T}\big)
    \\
    &= \sum_{i=0}^n \sum_{k=0}^n \big( \Pi_K \vec{V}(z_i) \cdot \Pi_K
    \vec{V}(z_k) \big)\, \big(\nabla \phi_{i|T} \cdot\nabla \phi_{k|T}\big)
   \\
    &= \nabla (\mcP_K \vec{V})_{|T} : \nabla (\mcP_K \vec{V})_{|T}
    =\abs{\nabla (\mcP_K \vec{V})_{|T}}^2. 
  \end{align*}
  This proves the first claim.

  Thanks to the Cauchy-Schwarz inequality, the second estimate is an
  immediate consequence of the first one. 
\qed\end{proof}

\begin{lemma}\label{lem:main}
  Let $\tria$ be a non-obtuse conforming 
  triangulation of $\Omega$ and let
  $F:\Omega\times\setR_\ge\to\setR$ satisfy Assumptions
  \ref{ass:Nemyckii}, \ref{ass:co}, and \ref{ass:monotone}.
  For $\vec{G}\in\VT^m$, $m\in\setN$, let $K\subset \setR^m$ be a
  convex and closed set such
  that $\vec{G}(\partial\Omega)\subset K$.
  Then the uniquely defined solution $\vec{U}\in \vec{G}+\VoT^m$
  of \eqref{eq:prob}
  satisfies    
  \begin{align*}
    \vec{U}(\Omega)\subset K.
  \end{align*}
\end{lemma}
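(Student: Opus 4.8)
The plan is to show that the projected function $\mcP_K\vec{U}$ is an admissible competitor with energy no larger than that of $\vec{U}$, and then invoke uniqueness of the minimiser to conclude $\mcP_K\vec{U}=\vec{U}$, which is exactly the statement $\vec{U}(\Omega)\subset K$. All the geometric and combinatorial work has already been done in Lemma~\ref{lem:pos}; what remains is to package it correctly using the structural assumptions on $F$.

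\textbf{Key steps.} First I would note that $\mcP_K\vec{U}\in\vec{G}+\VoT^m$: this is precisely \eqref{eq:GcK}, which applies because $\vec{G}(\partial\Omega)\subset K$ by hypothesis, so $\mcP_K\vec{U}$ is a legitimate competitor in \eqref{eq:prob}. Second, I would compare energies pointwise. By Lemma~\ref{lem:pos} we have $\abs{\nabla\mcP_K\vec{U}(x)}\le\abs{\nabla\vec{U}(x)}$ for almost every $x\in\Omega$. Since $F(x,\cdot)$ is monotone increasing for a.e.\ $x$ (Assumption~\ref{ass:monotone}), this gives
\begin{align*}
  F\big(x,\abs{\nabla\mcP_K\vec{U}(x)}\big)\le F\big(x,\abs{\nabla\vec{U}(x)}\big)\qquad\text{a.e.\ in }\Omega.
\end{align*}
Integrating over $\Omega$ yields $\energy(\mcP_K\vec{U})\le\energy(\vec{U})$. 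Third, since $\vec{U}$ is the \emph{unique} minimiser over $\vec{G}+\VoT^m$ (Proposition~\ref{p:solution}), the competitor $\mcP_K\vec{U}$ can have energy no larger than the minimum only if $\mcP_K\vec{U}=\vec{U}$. Hence $\vec{U}(z)=\Pi_K\vec{U}(z)\in K$ for every node $z\in\nodes$, and by convexity of $K$ together with the piecewise-affine structure of $\VT^m$ (as recorded in \eqref{eq:cK}) we get $\vec{U}(\Omega)=\mcP_K\vec{U}(\Omega)\subset K$.

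\textbf{Main obstacle.} There is no serious analytic obstacle left; the crux of the argument is entirely contained in Lemma~\ref{lem:pos}, whose proof in turn rests on the non-obtuseness characterisation (Proposition~\ref{p:non-obtuse}) and the convexity inequality of Lemma~\ref{lem:TKdir2}. The only point requiring a little care is the logical use of uniqueness: one must observe that $\mcP_K\vec{U}$ is admissible \emph{before} comparing energies, and that strict convexity \eqref{eq:convexity} is what makes the minimiser unique so that the equality $\energy(\mcP_K\vec{U})=\energy(\vec{U})$ forces $\mcP_K\vec{U}=\vec{U}$ rather than merely giving another minimiser. A minor subtlety is that monotonicity of $F$ is used only in the ``increasing'' direction and does not need strictness, so the reduction of the gradient norm suffices even when it is not strict.
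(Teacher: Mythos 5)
Your proposal is correct and follows essentially the same route as the paper: project with $\mcP_K$, use \eqref{eq:GcK} for admissibility, combine Lemma~\ref{lem:pos} with Assumption~\ref{ass:monotone} to get $\energy(\mcP_K\vec{U})\le\energy(\vec{U})$, and invoke uniqueness of the minimiser to conclude $\mcP_K\vec{U}=\vec{U}$, hence $\vec{U}(\Omega)\subset K$. The additional detail you supply (the pointwise inequality for $F$ and the explicit appeal to \eqref{eq:cK} at the end) is a faithful expansion of the paper's argument.
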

\begin{proof}
  It follows from Proposition \ref{p:solution}, that problem
  \eqref{eq:prob} has a unique solution $\vec{U}\in \vec{G}+\VoT^m$.
  Since $K$ is a closed convex set we conclude from Assumption
  \ref{ass:monotone} and Lemma \ref{lem:pos} that
  \begin{align*}
    \mathcal{J}(\vec{U}) \geq \mathcal{J}(\mcP_K \vec{U}).
  \end{align*}
  It follows by \eqref{eq:GcK} that 
  $\mcP_K \vec{U} \in \vec{G}+ \VoT^m$. Since $\vec{U}$ is the unique
  minimiser of $\energy$ in  $\vec{G}+\VoT^m$ we conclude $\mcP_K \vec{U} =
  \vec{U}$. The desired assertion is then a consequence of~\eqref{eq:GcK}.
\qed\end{proof}

From this result the discrete maximum principle, Theorem
\ref{t:main}, follows as a corollary. 

\begin{proof}[Theorem \ref{t:main}]
  The convex set $K\definedas
  \operatorname{conv\,hull}\big(\vec{G}(\partial\Omega)\big)\subset
  \setR^m$ satisfies all conditions of Lemma \ref{lem:main}.
\qed\end{proof}

\begin{remark}
  We emphasize, that the proof of the convex hull property
  (Theorem~\ref{t:main}) does not 
  directly resort to Assumptions~\ref{ass:Nemyckii}
  and~\ref{ass:co}, which are only used in Proposition
  \ref{p:solution}  to ensure the existence of a
  solution. In fact, a unique minimiser $\vec{U}\in\vec{G}+\VoT$ 
  of~\eqref{eq:prob} satisfies the convex hull property
  (Theorem~\ref{t:main}) 
  if Assumption~\ref{ass:monotone} holds. The uniqueness of the
  minimiser is usually a
  consequence of the strict convexity of the energy.
\end{remark}
  

\section{Applications and Extensions}
\label{sec:extensions}
In this section we apply the above techniques  to some particular exemplary
problems and extend Theorem \ref{t:main} in several ways.

\subsection{The \texorpdfstring{$p$}{p}-Laplace Problem}
\label{sec:SpLaplace}
This section is concerned with the so-called $p$-Laplace operator. We
verify that $p$-harmonic functions fit into the framework of Section
\ref{sec:problem-setting} and then prove a discrete maximum principle
for scalar solutions of a $p$-Laplace problem with non-positive
right-hand side.  To this end let 
$\tria$ be a conforming triangulation of the $n$-dimensional domain
$\Omega$ with polyhedral 
boundary, $\vec{G}\in\VT^m$, $m\in\setN$, and $p,q\in(1,\infty)$ with
$\frac1p+\frac1q=1$.  It is well known, that 
\begin{align}
  \label{eq:pharm}
  \begin{alignedat}{2}
    \int_\Omega\abs{\nabla \vec{U}}^{p-2}\nabla
    \vec{U}:\nabla \vec{V} \,{\rm d}x&= 0&\qquad&\text{in}~\Omega
  \end{alignedat}
\end{align}
uniquely determines a function $\vec{U}\in\vec{G}+\VoT^m$. The equivalent
minimising problem reads as
\begin{align*}
  \energy(\vec{V})\definedas \int_\Omega\frac1p \abs{\nabla\vec{V}}^p\,{\rm
    d}x\to \min\qquad \text{in}~\vec{G}+\VoT.
\end{align*}
It can easily be verified that the operator 
\begin{align*}
  F(x,t)\definedas \frac1p \abs{t}^p,\qquad t\ge0
\end{align*}
satisfies Assumptions
\ref{ass:Nemyckii}, \ref{ass:co}, and
\ref{ass:monotone}. Consequently, if $\tria$ is non-obtuse, all
conditions of Theorem \ref{t:main} are satisfied.

For $m=1$ and $f\in L^{q}(\Omega)$ we investigate the non-homogeneous
$p$-Laplace problem 
\begin{align}\label{eq:pLaplace}
  \begin{alignedat}{2}
    \int_\Omega\abs{\nabla U}^{p-2}\nabla
    U\cdot\nabla V \,{\rm d}x&= \int_\Omega fV\,{\rm d}x&\qquad&\text{in}~\Omega\\
    U&=G&\qquad&\text{on}~\partial\Omega.
  \end{alignedat}
\end{align}
The unique solution $U\in G+\VoT$ is the
minimiser of the energy
\begin{align*}
  \energy_f(V)\definedas \int_\Omega\frac1p \abs{\nabla V}^p-fV\,{\rm d}x,
  \qquad V\in G+\VoT.
\end{align*}
Since the integrand depends not only on the gradient of the function
but also on the function value itself, we can not directly apply Theorem
\ref{t:main}. However, by modifying the arguments we can prove the
following result. 
\begin{theorem}\label{t:scalar-main}
  Let the conforming triangulation $\tria$ of $\Omega$ be non-obtuse
  and let $f\le0$ in $\Omega$. 
  
  Then the unique solution $U\in G+\VoT$ of \eqref{eq:pLaplace} satisfies
  \begin{align}
    \max U(\Omega)\le \max U(\partial\Omega). 
    \tag{DMP}
 \end{align}
\end{theorem}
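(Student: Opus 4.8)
The plan is to adapt the projection argument of Lemma~\ref{lem:main} to the half-space $K\definedas\{y\in\setR\colon y\le M\}$ with $M\definedas\max U(\partial\Omega)$. The obstruction compared to Theorem~\ref{t:main} is the presence of the linear term $-\int_\Omega fV\,{\rm d}x$, which is not of the pure gradient form covered by Lemma~\ref{lem:pos}; so I must show separately that cutting off above $M$ does not increase this term, using the sign condition $f\le0$. Concretely, set $W\definedas\mcP_K U\in G+\VoT$ (the boundary values are preserved since $G(\partial\Omega)\subset K$ by the choice of $M$, cf.~\eqref{eq:GcK}). I claim $\energy_f(W)\le\energy_f(U)$, which by uniqueness of the minimiser forces $W=U$ and hence $U(\Omega)\subset K$, \ie $\max U(\Omega)\le M$, as desired.

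For the gradient part, Lemma~\ref{lem:pos} gives $\abs{\nabla U}\ge\abs{\nabla W}$ a.e.\ in $\Omega$, and since $t\mapsto\tfrac1p\abs{t}^p$ is monotone on $\setR_\ge$ we get $\int_\Omega\tfrac1p\abs{\nabla W}^p\,{\rm d}x\le\int_\Omega\tfrac1p\abs{\nabla U}^p\,{\rm d}x$. For the lower-order term I must show $-\int_\Omega fW\,{\rm d}x\le-\int_\Omega fU\,{\rm d}x$, \ie $\int_\Omega f(U-W)\,{\rm d}x\le0$. Now $W(z)=\Pi_K U(z)=\min\{U(z),M\}$ at every node $z\in\nodes$, so $U(z)-W(z)=\max\{U(z)-M,0\}\ge0$ at all nodes; since $U-W$ is continuous and piecewise affine with nonnegative nodal values, $U-W\ge0$ a.e.\ in $\Omega$. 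Combined with $f\le0$ this yields $f(U-W)\le0$ a.e., hence the integral is nonpositive. Adding the two estimates gives $\energy_f(W)\le\energy_f(U)$.

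The main obstacle is the observation that $\mcP_K$ with $K$ a half-space acts nodewise as a simple truncation $\min\{\cdot,M\}$, and that truncation of a $\P_1$ function from above yields a $\P_1$ function lying pointwise below the original \emph{everywhere}, not merely at the nodes — this is exactly where the piecewise-affine structure (convexity of $K$ combined with linearity on each simplex) enters, the same mechanism as in~\eqref{eq:cK}. Once this is in place, the sign of $f$ does the rest for the lower-order term and Lemma~\ref{lem:pos} handles the principal part; strict convexity of $\energy_f$ (from $p>1$) then upgrades $\energy_f(W)\le\energy_f(U)$ to $W=U$. Note that the argument never uses that the energy is the $p$-energy specifically: any $F$ satisfying Assumptions~\ref{ass:Nemyckii}, \ref{ass:co}, \ref{ass:monotone} would work, and the scalar structure is only needed so that "convex set in $\setR$'' means "half-line,'' making the nodewise projection a truncation that respects the one-sided pointwise bound required to exploit $f\le0$.
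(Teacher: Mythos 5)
Your proposal is correct and follows essentially the same route as the paper: project onto the half-line $K=(-\infty,\max U(\partial\Omega)]$, use Lemma~\ref{lem:pos} together with the monotonicity of $t\mapsto\tfrac1p t^p$ for the gradient term, use $\mcP_K U\le U$ pointwise together with $f\le 0$ for the lower-order term, and conclude $\mcP_K U=U$ by uniqueness of the minimiser. If anything, your justification that $U-\mcP_K U\ge 0$ everywhere (nonnegative nodal values of a piecewise affine function) is slightly more careful than the paper's statement of the corresponding pointwise identity.
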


\begin{proof}
  We define the closed convex set
  \begin{align*}
    K\definedas
    \big(-\infty,
      \,\max U(\partial\Omega)\big]
    \supset\operatorname{conv\,hull}\big(U(\partial\Omega)\big). 
  \end{align*}
  Hence the projection operator $\mcP_K:\VT\to\VT$, defined in
  \eqref{df:mcP} satisfies \eqref{eq:mcProp}, \ie we have
  that $\mcP_KU\in G+\VoT$. Clearly the mapping $(x,t)\mapsto
  F(x,t)\definedas \frac1pt^p$, $(x,t)\in\Omega\times\setR_\ge$,
  satisfies all conditions in Section 
  \ref{sec:energy-minimisation}. In particular, it satisfies
  Assumption \ref{ass:monotone}, \ie it is
  monotone increasing in its second argument. 
  Therefore, we obtain by Lemma \ref{lem:pos} that 
  \begin{align*}
    \int_\Omega\frac1p\abs{\nabla \mcP_K U}^p\,{\rm d}x\le
    \int_\Omega\frac1p\abs{\nabla U}^p\,{\rm d}x. 
  \end{align*}
  Moreover, by means of \eqref{eq:PiK} and the particular choice of 
  the convex set $K$, we have 
  \begin{align*}
    \mcP_K U(x)= \min\big\{U(x), \max U(\partial\Omega)\big\}\le
    U(x),\quad\text{for all}~x\in\Omega.
  \end{align*}
  Hence the assumption $f\le 0$ in $\Omega$ implies 
  \begin{align}\label{eq:f}
     \int_\Omega f\mcP_KU \,{\rm d}x\ge\int_\Omega fU \,{\rm d}x.
  \end{align}
  Combining these estimates yields
  \begin{align*}
    \energy(\mcP_K U)\le \energy(U).
  \end{align*}
  Since $U$ is the unique minimiser we arrive with \eqref{eq:mcProp} at 
  $U(\Omega)= (\mcP_K U)(\Omega)\subset K$. This proves the
  assertion. 
\qed\end{proof}

\begin{remark}[Orlicz functions]
  \label{rem:psi}
  The $p$-Laplace operator often serves as a prototype for a much
  larger class of non-linear operators: Indeed, if we have a continuous,
  strictly convex and monotone function $\psi:\setR_\ge\to\setR_\ge$,
  then the non-linear function $F(x,t)\definedas \psi(t)$ satisfies
  Assumptions \ref{ass:Nemyckii}, \ref{ass:co}, and
  \ref{ass:monotone}. Consequently, the discrete convex hull property 
  (Theorem \ref{t:main}) holds also for this
  larger class of problems. 
  
  In order to prove Theorem \ref{t:scalar-main} for more
  general nonlinearities we need to ensure the solvability
  of the problem. This can e.g. be achieved either by restrictions 
  on the right-hand side
  $f$ or by assuming the additional growth condition 
  $\lim_{t\to\infty}\psi(t)/t=\infty$ for the non-linearity. 
  We illustrate the first case in Remark \ref{rem:meanc} below by means of
  the mean curvature problem, \ie  $\psi(t) = \sqrt{1+t^2}$.
  The latter case includes 
  for example all N-functions.
  For more information on N-functions and on finite element
  approximations for this kind of problems we refer to
  \cite{DieningRuzicka:07b,DieningKreuzer:08,BelDieKreu:11}.
\end{remark}
\begin{remark}[Mean curvature problem]
  \label{rem:meanc}
  We emphasize that also the discrete mean curvature
  problem fits into the setting of
  Section~\ref{sec:energy-minimisation}. To see this, we consider the
  corresponding energy 
  \begin{align*}
    \energy(\vec{V})\definedas 
    \int_{\Omega}\sqrt{1+\abs{\nabla{\vec{V}}}^2}\,{\rm d}x, \quad
    \vec{V}\in \VT^m.
  \end{align*}
  Obviously, the mapping
  $\Omega\times\setR_\ge\ni(x,t)\mapsto F(x,t)\definedas
  \sqrt{1+t^2}$ has linear growth and is monotone in $t$. Therefore,
  it satisfies Assumptions 
  \ref{ass:Nemyckii} and \ref{ass:monotone}. 
  Moreover, it is coercive and thanks to $\tfrac{d^2}{dt^2}
  \sqrt{1+t^2}=\frac1{\sqrt{1+t^2}}>0$ for all $t\ge0$, it is strictly convex. 
  This verifies Assumption~\ref{ass:co} and  consequently, minimisers
  $\vec{U}\in\vec{G}+\VoT^m$ of the 
  energy functional $\energy$ in $\vec{G}+\VoT^m$, $\vec{G}\in\VT$, 
  satisfy the convex hull property
  Theorem~\ref{t:main}. 
  
  Consider the scalar case $m=1$ and let $f\in L^2(\Omega)$, $f\le0$
  in $\Omega$.  Let $G\in\VT$, 
  according to \cite{FierroVeeser:03} the energy 
  \begin{align}\label{eq:en_meanc}
    \energy_f({V})\definedas 
    \int_{\Omega}\sqrt{1+\abs{\nabla{V}}^2}-fV\,{\rm d}x, \quad V\in G+\VoT,
  \end{align}
  is coercive if and only if there exists $\varepsilon>0$, such that 
  \begin{align*}
    \int_\Omega fV\,{\rm d}x\le (1-\varepsilon) \int_\Omega\abs{\nabla
      V}\,{\rm d}x\qquad\text{for all}~V\in\VoT.
  \end{align*}
  Under this condition Theorem \ref{t:scalar-main} directly applies to
  minimisers of the energy defined in \eqref{eq:en_meanc}. 
\end{remark}

\subsection{Problems Involving Lower Order Terms}
\label{sec:react-diff}
In this section we investigate
non-linear problems involving lower order terms in form of lumped
masses.  
Let $\tria$ be a conforming triangulation of $\Omega$ and
$\vec{G}\in\VT^m$, $m\in\setN$. For $1<p,q<\infty$ let the energy
\begin{align}\label{eq:energy_rd}
  \energy(\vec{V})\definedas\int_\Omega\tfrac1p\abs{\nabla \vec{V}}^p+\tfrac1q\,
  \abs{\vec{V}}^q{\rm d}x\qquad\vec{V}\in\VT^m.
\end{align}
serve as our prototype. Obviously, there exists a unique
minimiser of $\energy$ in $\vec{G}+\VoT^m$.

In order to prove a convex hull property (Theorem \ref{t:main}) in the
spirit of Section~\ref{sec:main}, we would need an analogue of Lemma
\ref{lem:pos}  for function values. 
Unfortunately, in general  we do not have 
  $\abs{\vec{V}}\ge\abs{\mcP_K \vec{V}}$ point-wise in $\Omega$.
One can construct examples where the discrete maximum principle is violated even
in one dimension; see e.g. \cite{BrandtsKorotovKrizek:2008}. 
However, for $p=q=2$ it is well known that maximum principles hold for
sufficiently small mesh-sizes if
the mesh is strictly acute; see
e.g. \cite{CiarletRaviart:73,BrandtsKorotovKrizek:2008}.

This additional assumptions can be avoided by mass-lumping, \ie instead of a
  minimiser  of~\eqref{eq:energy_rd} we look for a minimiser of the
  modified energy 
  \begin{align}\label{eq:energy_lump}
    \energy_\ell(\vec{V})\definedas \int_\Omega \frac1p\abs{\nabla
      \vec{V}}^p\,{\rm d}x+\sum_{z\in\nodes} 
    \frac{\abs{\support(\phi_z)}}{n+1} \frac 1q \abs{\vec{V}(z)}^q,
  \end{align}
  which is coercive and
  strictly convex. Hence there exists a unique $\vec{U}\in {G}+\VoT^m$
  minimising $\energy_\ell$ in $\vec{G}+\VoT^m$. This minimiser
  satisfies the following convex hull property.

\begin{theorem}\label{thm:rdlumped}
    Let the conforming triangulation $\tria$ of $\Omega$ be non-obtuse
    and let
    $\vec{U}\in \vec{G}+\VoT^m$ be the unique minimiser of  the energy
    $\energy_\ell$ in $\vec{G}+\VoT^m$.
    
    Then we have the max hull property
    \begin{align*}
      \vec{U}(\Omega)\subset\operatorname{conv\, hull}\big(
      \vec{U}(\partial\Omega)\cup \{0\}\big). 
    \end{align*}
\end{theorem}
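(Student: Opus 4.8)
The plan is to mimic the proof of Lemma~\ref{lem:main}, replacing the convex set $K$ by $\widetilde{K}\definedas\operatorname{conv\,hull}\big(\vec{G}(\partial\Omega)\cup\{0\}\big)$ and using the \emph{same} nodal projection $\mcP_{\widetilde{K}}$ from \eqref{df:mcP}. The key point is that the lumped energy $\energy_\ell$ splits into a gradient part, to which Lemma~\ref{lem:pos} applies verbatim, and a nodal mass part, which now behaves well under the projection because it is evaluated only at vertices.

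First I would record that $\vec{G}(\partial\Omega)\subset\widetilde{K}$ and $0\in\widetilde{K}$, so by \eqref{eq:mcProp} we have $\mcP_{\widetilde{K}}\vec{U}\in\vec{G}+\VoT^m$ and $(\mcP_{\widetilde{K}}\vec{U})(\Omega)\subset\widetilde{K}$; thus $\mcP_{\widetilde{K}}\vec{U}$ is an admissible competitor. Next, since $t\mapsto\tfrac1p t^p$ is monotone increasing on $\setR_\ge$, the second estimate of Lemma~\ref{lem:pos} (applied on the non-obtuse mesh $\tria$) gives
\begin{align*}
  \int_\Omega \tfrac1p\abs{\nabla\mcP_{\widetilde{K}}\vec{U}}^p\,{\rm d}x
  \le \int_\Omega \tfrac1p\abs{\nabla\vec{U}}^p\,{\rm d}x.
\end{align*}
For the lumped mass term I would use that $\widetilde{K}$ contains $0$, hence $\Pi_{\widetilde{K}}$ is norm-nonincreasing: indeed, applying Lemma~\ref{lem:TKdir2} with $z=0\in\widetilde{K}$ yields $(x-\Pi_{\widetilde{K}}x)\cdot\Pi_{\widetilde{K}}x\ge0$, and combined with Cauchy--Schwarz this gives $\abs{\Pi_{\widetilde{K}}x}\le\abs{x}$ for every $x\in\Rm$. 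Therefore, for each vertex $z\in\nodes$,
\begin{align*}
  \abs{(\mcP_{\widetilde{K}}\vec{U})(z)}^q=\abs{\Pi_{\widetilde{K}}\vec{U}(z)}^q\le\abs{\vec{U}(z)}^q,
\end{align*}
and summing against the nonnegative weights $\abs{\support(\phi_z)}/(n+1)$ shows the mass part of $\energy_\ell$ does not increase either. Adding the two estimates yields $\energy_\ell(\mcP_{\widetilde{K}}\vec{U})\le\energy_\ell(\vec{U})$. Since $\vec{U}$ is the \emph{unique} minimiser of $\energy_\ell$ in $\vec{G}+\VoT^m$, we conclude $\mcP_{\widetilde{K}}\vec{U}=\vec{U}$, and hence $\vec{U}(\Omega)=(\mcP_{\widetilde{K}}\vec{U})(\Omega)\subset\widetilde{K}=\operatorname{conv\,hull}\big(\vec{U}(\partial\Omega)\cup\{0\}\big)$, using $\vec{U}(\partial\Omega)=\vec{G}(\partial\Omega)$.

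The only genuinely new ingredient beyond Lemma~\ref{lem:main} is the observation that the projection onto a convex set containing the origin is norm-nonincreasing \emph{at each node} — which is exactly why $0$ must be adjoined to the convex hull in the statement, and why mass-lumping (evaluation at vertices only) is essential: for the unlumped energy \eqref{eq:energy_rd} the quantity $\abs{\mcP_K\vec{V}}$ is compared to $\abs{\vec{V}}$ only at vertices, not pointwise, so no analogue of Lemma~\ref{lem:pos} for function values is available in general. I do not expect a serious obstacle here; the main thing to be careful about is that the convexity/coercivity of $\energy_\ell$ (already asserted in the text) genuinely delivers a unique minimiser, so that the final step ``$\mcP_{\widetilde{K}}\vec{U}=\vec{U}$'' is legitimate.
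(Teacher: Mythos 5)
Your proposal is correct and follows essentially the same route as the paper: Lemma~\ref{lem:pos} handles the gradient part, and Lemma~\ref{lem:TKdir2} with $z=0$ (plus Cauchy--Schwarz) gives $\abs{\Pi_K x}\le\abs{x}$, hence nodal non-increase of the lumped mass term, after which uniqueness of the minimiser forces $\mcP_K\vec{U}=\vec{U}$. Your additional remarks on why $0$ must be adjoined to the hull and why lumping is essential accurately reflect the paper's own discussion preceding the theorem.
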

\begin{proof}
  Thanks to $K\definedas \operatorname{conv\, hull}\big(
  \vec{G}(\partial\Omega)\cup \{0\}\big)$ we can choose $z=0$ in
  Lemma \ref{lem:TKdir2} and observe that 
  \begin{align*}
    0\ge \abs{\Pi_K x}^2-x\cdot\Pi_Kx \ge \abs{\Pi_K
      x}^2-\abs{x}\abs{\Pi_Kx}\quad\text{for all}~x\in \setR^m.
  \end{align*}
  Consequently we have
  \begin{align*}
    \abs{\Pi_K\vec{U}(z)}\le
    \abs{\vec{U}(z)}\qquad\text{for all}~z\in\nodes.
  \end{align*}
  Combining this with Lemma \ref{lem:pos}, we obtain
  \begin{align*}
    \energy_\ell(\mcP_K \vec{U})\le \energy_\ell(\vec{U}).
  \end{align*}
  Recalling that the minimiser $\vec{U}$ is unique in
  $\vec{G}+\VoT^m$ we have $\vec{U}=\mcP_K\vec{U}$. Hence
  we can conclude with  \eqref{eq:cK} that 
  $\vec{U}(\Omega)= (\mcP_K \vec{U})(\Omega)\subset
  K$. This proves the assertion. 
\qed\end{proof}

\begin{remark}
  In the scalar case $m=1$ the minimiser $U\in G+\VoT$ of the energy 
  \begin{align*}
    \energy_\ell({V})\definedas \int_\Omega \frac1p\abs{\nabla
      {V}}^p-fV\,{\rm d}x+\sum_{z\in\nodes} 
    \frac{\abs{\support(\phi_z)}}{n+1} \frac 1q \abs{{V}(z)}^q,
  \end{align*}
  with $f\le0$ satisfies the discrete maximum principle
  \begin{align*}
    \max U(\Omega)\le \max\left\{0,\,\max U(\partial\Omega)\right\}. 
  \end{align*}
  This claim is an easy consequence of the above arguments combined
  with \eqref{eq:f}.  
\end{remark}

  \subsection{Strong Convex Hull Property}
\label{sec:strict}

In some situations we have a strong version of the
convex hull property. For scalar functions this is well known as 
the strong maximum principle: if a subharmonic
function attains a global maximum in the interior of the domain
$\Omega$, then the functions must be constant (on the connected component).

We will introduce in this section a discrete strong convex hull
property. 
To this end we need that the conforming triangulation
$\mathcal{T}$ of $\Omega$ is acute:
A simplex $T\in\tria$ is called acute if the angles between
any two of its sides are less than $\frac\pi2$. Similar to
Definition~\ref{df:non-obtuse}, we can formulate this definition also in
terms of the Lagrange basis.
\begin{definition}[acuteness]\label{df:acute}
  Let $\tria$ be a conforming triangulation of $\Omega$. 
  We call an $n$-simplex $T\in\tria$ 
    acute if 
  \begin{align*}
    \nabla \phi_{z|T}\cdot\nabla\phi_{y|T}< 0 \qquad\text{for
      all}~z,y\in T\cap\nodes~\text{with}~z\neq y.
  \end{align*}

  The conforming triangulation $\tria$ of $\Omega$ is called
    acute if all simplexes $T\in\tria$ are acute.
\end{definition}

Let $K \subset \Rm$ be a non-empty, convex, and closed set. Then we call
$x \in K$ an extreme point if it is no element of a line
segment of two different points of~$K$. By $\extr(K)$ we denote the set of
extreme points of~$K$. It is well known that if $K$ is additionally
bounded, we have
$\convexhull(\extr(K)) =\convexhull(K)$.

We assume the following additional condition on our mesh.
\begin{assumption}\label{ass:TcapNo}
  We assume that $\tria$ is a conforming triangulation of $\Omega$ such
  that for every $T\in\tria$, we have $T\cap\nodes_0\neq\emptyset$. 
\end{assumption}
  In other words, every simplex of the triangulation $\tria$ has at least one interior
  vertex. As a consequence, we have that 
  every vertex of the boundary is in the support of one of
  the interior Lagrange basis functions. Assumption \ref{ass:TcapNo} is not a
  severe restriction.

We can now state the discrete strong convex hull property for
discrete $p$-harmonic functions, \ie solutions of \eqref{eq:pharm}. 
\begin{theorem}
  \label{thm:strict}
  Let the acute conforming triangulation $\tria$ of $\Omega$ satisfy
  Assumption \ref{ass:TcapNo}. Let $\vec{U}\in \vec{G}+\VoT^m,$ be
  discretely $p$-harmonic 
  with boundary values~$\vec{G}\in\VT^m$, \ie $\vec{U}$ is a
  solution of~\eqref{eq:pharm}. 

  If $\vec{U}(z_0) \in
  \extr(\convexhull(\vec{U}(\Omega))$ for some $z_0 \in \Omega$,
  then $\vec{U}$ is constant.
\end{theorem}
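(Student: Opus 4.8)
The plan is to show that the extreme-point hypothesis forces the projection onto a suitable half-space (or closed convex set) to leave $\vec U$ unchanged, and then to propagate the equality $\vec U(z_0)=\vec U(z)$ from $z_0$ to all neighbouring vertices using the strict negativity of the basis function gradients on acute meshes, together with Assumption~\ref{ass:TcapNo} to reach the boundary. First I would fix $p_0\definedas\vec U(z_0)$ and, since $p_0$ is an extreme point of the (bounded) convex hull of $\vec U(\Omega)$, choose a closed half-space $K=\{x\in\Rm\colon \vec w\cdot x\le c\}$ that contains $\convexhull(\vec U(\Omega))$ and touches it only along a face whose relative interior contains $p_0$ in a \emph{supporting} way; more precisely I would pick $\vec w$ and $c$ so that $\vec w\cdot p_0=c$ and $\vec w\cdot\vec U(z)<c$ whenever $\vec U(z)\neq p_0$. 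Because $\vec U$ is $p$-harmonic it is in particular a minimiser of $\energy(\vec V)=\int_\Omega\frac1p\abs{\nabla\vec V}^p$ over $\vec G+\VoT^m$, and since $\vec G(\partial\Omega)\subset\vec U(\Omega)\subset K$, equation~\eqref{eq:GcK} gives $\mcP_K\vec U\in\vec G+\VoT^m$. Lemma~\ref{lem:pos} then yields $\energy(\mcP_K\vec U)\le\energy(\vec U)$, and uniqueness of the minimiser forces $\mcP_K\vec U=\vec U$.

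The next step is to extract pointwise information from the equality $\mcP_K\vec U=\vec U$ by re-examining the proof of Lemma~\ref{lem:pos}. On each simplex $T$ the chain of inequalities there must in fact be a chain of equalities. The crucial one is
\begin{align*}
  \sum_{i=0}^n\sum_{k\neq i}\big(\vec U(z_i)-\Pi_K\vec U(z_i)\big)\cdot\big(\Pi_K\vec U(z_k)-\Pi_K\vec U(z_i)\big)\,\nabla\phi_{i|T}\cdot\nabla\phi_{k|T}=0,
\end{align*}
where each summand is non-negative: the dot product is $\le0$ by Lemma~\ref{lem:TKdir2} and $\nabla\phi_{i|T}\cdot\nabla\phi_{k|T}<0$ by acuteness. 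Hence every summand vanishes, and acuteness lets me divide out the gradient factor to conclude, for every simplex $T$ and every pair of distinct vertices $z_i,z_k$ of $T$,
\begin{align*}
  \big(\vec U(z_i)-\Pi_K\vec U(z_i)\big)\cdot\big(\Pi_K\vec U(z_k)-\Pi_K\vec U(z_i)\big)=0.
\end{align*}
Now I would use the explicit geometry of the half-space $K$: since $\vec U(z)=\mcP_K\vec U(z)=\Pi_K\vec U(z)$, all values already lie in $K$, so I should instead argue with a half-space that genuinely cuts, i.e. replace $K$ by $K_\delta=\{\vec w\cdot x\le c-\delta\}$ for small $\delta>0$ and compare. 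The cleaner route is: among the vertices $z$ with $\vec U(z)=p_0$ (a non-empty set containing $z_0$), take any simplex $T$ containing such a $z$; the equality above with $z_i=z$ and the fact that $p_0$ is exposed by $\vec w$ forces $\vec w\cdot\vec U(z_k)=\vec w\cdot p_0$ for every other vertex $z_k$ of $T$, hence $\vec U(z_k)=p_0$ by the choice of $\vec w$. Thus $\{z\colon\vec U(z)=p_0\}$ is closed under "being a vertex of a common simplex", and by connectedness of $\Omega$ (through the simplicial mesh) it equals all of $\nodes$.

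The main obstacle I anticipate is making the separation argument in the first paragraph genuinely \emph{strict} on the complement of the face through $p_0$: since $p_0$ is only assumed extreme, not exposed, a single supporting hyperplane need not isolate $p_0$ from all other vertex values, and I may need to intersect finitely many supporting half-spaces (one for each "bad" direction) or argue by induction on the dimension of the minimal face of $\convexhull(\vec U(\Omega))$ containing the offending values — extreme points are exposed after passing to such a face. Once that geometric point is handled, propagation via acuteness is routine, and Assumption~\ref{ass:TcapNo} is used only to guarantee that the propagation started from an interior vertex actually reaches every boundary vertex (each boundary vertex lies in some simplex having an interior vertex, so it is connected through the mesh to $z_0$); I would finish by noting $\vec U\equiv p_0$, hence $\vec U$ is constant.
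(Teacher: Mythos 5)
There is a genuine gap at the heart of your argument: the equality version of Lemma~\ref{lem:pos} that you want to exploit is vacuous for your choice of $K$. Since $K$ is a supporting half-space of $\convexhull(\vec U(\Omega))$, every nodal value already lies in $K$, so $\Pi_K\vec U(z_i)=\vec U(z_i)$ for all $i$ and each summand $\bigl(\vec U(z_i)-\Pi_K\vec U(z_i)\bigr)\cdot\bigl(\Pi_K\vec U(z_k)-\Pi_K\vec U(z_i)\bigr)\,\nabla\phi_{i|T}\cdot\nabla\phi_{k|T}$ is identically zero; it forces nothing about $\vec w\cdot\vec U(z_k)$, so your ``cleaner route'' does not go through. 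The proposed repair $K_\delta=\{\vec w\cdot x\le c-\delta\}$ cannot work either: by Theorem~\ref{t:main} we have $p_0\in\vec U(\Omega)\subset\convexhull\bigl(\vec U(\partial\Omega)\bigr)$, hence $\max_{\partial\Omega}\vec w\cdot\vec U\ge\vec w\cdot p_0=c>c-\delta$, so $\vec G(\partial\Omega)\not\subset K_\delta$, \eqref{eq:GcK} fails, $\mcP_{K_\delta}\vec U$ is not an admissible competitor, and the minimality comparison collapses. More fundamentally, minimality plus projection can only ever reproduce the weak convex hull property; the strong version needs the Euler--Lagrange equation, which your proof never invokes (the theorem is stated for solutions of \eqref{eq:pharm}, not merely for minimisers, and the paper's closing remark ties the extension to \Frechet differentiability for exactly this reason).

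The paper's proof avoids both of your obstacles by arguing locally with the weak formulation. For an interior node $z_0$ it tests \eqref{eq:pharm} with $\vec e_j\phi_{z_0}\in\VoT^m$ and rewrites the result as $\beta_0\,\vec U(z_0)=\sum_{y\in C}\beta_y\,\vec U(y)$, where $\beta_0=\int_\Omega\abs{\nabla\vec U}^{p-2}\abs{\nabla\phi_{z_0}}^2\,{\rm d}x$, $\beta_y=\int_\Omega\abs{\nabla\vec U}^{p-2}(-\nabla\phi_y\cdot\nabla\phi_{z_0})\,{\rm d}x\ge0$ by acuteness, and $\beta_0=\sum_{y\in C}\beta_y$ because the hat functions sum to one. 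If $\vec U$ is non-constant on $\support\phi_{z_0}$ then $\beta_0>0$ and acuteness forces at least two of the $\beta_y$ to be strictly positive, exhibiting $\vec U(z_0)$ as a proper convex combination of neighbouring values --- a direct contradiction to extremality that needs no separating hyperplane, so the exposed-versus-extreme difficulty you flag never arises. Your final propagation step (an extreme point of $\convexhull(\vec U(\Omega))$ is extreme in every convex subset containing it, in particular in $\convexhull(\vec U(C))$; Assumption~\ref{ass:TcapNo} then carries the constancy to every simplex) does match the paper and is fine, but it rests on the local statement you have not actually established.
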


Before we get to the proof of Theorem~\ref{thm:strict} we need a local
version of this result.
\begin{lemma}
  \label{lem:strictcell}
  Let $\tria$ be acute and let $\vec{U}$ be a solution
  of~\eqref{eq:pharm}.  Let $z_0 \in \nodes_0$ and let $C:= (\nodes
  \setminus \set{z_0}) \cap \support \phi_{z_0}$, \ie $C$ is the set
  of neighbors of~$z_0$. Then $\vec{U}(z_0) \in \extr( \convexhull(
  \vec{U}(C)))$ implies that $\vec{U}$ is constant on $\support
  \phi_{z_0}$.
\end{lemma}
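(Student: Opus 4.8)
The plan is to exploit the Euler--Lagrange equation \eqref{eq:pharm} tested with the single hat function $\phi_{z_0}$ together with the strict negativity of $\nabla\phi_{z_0}\cdot\nabla\phi_y$ guaranteed by acuteness. First I would localize: on $\support\phi_{z_0}$ write $\vec{U}=\sum_{z}\vec{U}(z)\phi_z$, where the sum runs over $z_0$ and its neighbors $C$. Choosing $\vec{H}=\phi_{z_0}\vec{e}$ for an arbitrary direction $\vec{e}\in\Rm$ (and using that $\phi_{z_0}$ has zero boundary values since $z_0\in\nodes_0$) gives, using $\nabla\phi_{z_0}=-\sum_{z\in C}\nabla\phi_z$ on each simplex,
\begin{align*}
  0=\int_\Omega \abs{\nabla\vec{U}}^{p-2}\,\nabla\vec{U}:\big(\vec{e}\otimes\nabla\phi_{z_0}\big)\,{\rm d}x
  =\sum_{z\in C}\big(\vec{U}(z_0)-\vec{U}(z)\big)\cdot\vec{e}\;\;\omega_z,
\end{align*}
where $\omega_z:=-\int_{\support\phi_{z_0}}\abs{\nabla\vec{U}}^{p-2}\,\nabla\phi_z\cdot\nabla\phi_{z_0}\,{\rm d}x$. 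Each $\omega_z$ is \emph{nonnegative}, being an integral of the nonnegative quantity $\abs{\nabla\vec{U}}^{p-2}(-\nabla\phi_z\cdot\nabla\phi_{z_0})$ over the patch; here acuteness gives $-\nabla\phi_z\cdot\nabla\phi_{z_0}>0$ on every simplex of the patch containing the edge $z_0z$. Since $\vec{e}$ was arbitrary, this means $\vec{U}(z_0)$ is the $\omega_z$-weighted barycenter $\big(\sum_{z\in C}\omega_z\big)^{-1}\sum_{z\in C}\omega_z\,\vec{U}(z)$ of the neighboring values (the total weight $\sum_z\omega_z=-\int\nabla\phi_{z_0}\cdot\nabla\phi_{z_0}=\int\abs{\nabla\phi_{z_0}}^2\abs{\nabla\vec{U}}^{p-2}$ is strictly positive unless $\vec{U}$ is constant on the patch, in which case we are done).

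Now invoke the hypothesis $\vec{U}(z_0)\in\extr(\convexhull(\vec{U}(C)))$. A barycenter with all weights strictly positive of a finite point set can be an extreme point of that set's convex hull only if all the points coincide; so if every $\omega_z>0$ we immediately get $\vec{U}(z)=\vec{U}(z_0)$ for all $z\in C$, hence $\vec{U}$ is constant on $\support\phi_{z_0}$ and we are finished. The remaining subtlety is that some $\omega_z$ could vanish --- but that can only happen if $\abs{\nabla\vec{U}}\equiv 0$ (hence $\vec{U}$ constant) on every simplex of the patch sharing the edge $z_0z$, because acuteness makes the integrand $\abs{\nabla\vec{U}}^{p-2}(-\nabla\phi_z\cdot\nabla\phi_{z_0})$ strictly positive wherever $\nabla\vec{U}\neq 0$ (note $p>1$ so $\abs{\nabla\vec{U}}^{p-2}$ is positive, with the usual care when $p<2$ and $\nabla\vec{U}$ vanishes). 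One then argues by connectedness of the patch $\support\phi_{z_0}$: partition the neighbors $C$ into those with $\omega_z>0$ (whose values must equal $\vec{U}(z_0)$ by the extreme-point argument applied to the reduced sum) and those with $\omega_z=0$ (across which $\vec{U}$ is locally constant, again equal to $\vec{U}(z_0)$). Either way every simplex in the patch has two vertices carrying the value $\vec{U}(z_0)$, and walking simplex-by-simplex through the connected patch propagates this to all vertices of $\support\phi_{z_0}$, giving $\vec{U}\equiv\vec{U}(z_0)$ there.

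The main obstacle is the case analysis around possibly vanishing weights $\omega_z$, i.e.\ making rigorous that $\omega_z=0$ forces $\nabla\vec{U}=0$ on the corresponding cells and then gluing these degenerate cells to the non-degenerate ones via connectedness of $\support\phi_{z_0}$; the clean case (all $\omega_z>0$) is essentially immediate from the extreme-point characterization of barycenters. A secondary technical point is the regularity of $\abs{\nabla\vec{U}}^{p-2}$ when $1<p<2$: it is only integrable, not bounded, where $\nabla\vec{U}$ vanishes, so one should phrase the positivity statements as "positive a.e.\ on the set $\{\nabla\vec{U}\neq 0\}$" and note that $\nabla\vec{U}$ is constant on each simplex, so on any fixed simplex either $\nabla\vec{U}=0$ identically or the integrand is a positive constant --- which removes the analytic difficulty entirely.
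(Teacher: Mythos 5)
Your proof is correct and follows essentially the same route as the paper: test \eqref{eq:pharm} with $\phi_{z_0}\vec{e}_j$, use acuteness and the partition of unity $\phi_{z_0}+\sum_{y\in C}\phi_y=1$ to write $\vec{U}(z_0)$ as a convex combination of the neighbouring values with nonnegative weights, and conclude from the extreme-point hypothesis. The only organisational difference is that you argue directly and dispose of vanishing weights via local constancy of $\vec{U}$, whereas the paper argues the contrapositive by locating a simplex with nonvanishing gradient to produce at least two strictly positive weights.
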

\begin{proof}
  Let us assume that $\vec{U}$ is non-constant on $\support
  \phi_{z_0}$. We have to prove that $\vec{U}(z_0) \not\in \extr(
  \convexhull( \vec{U}(C)))$.

  Since $z_0\in\nodes_0$, we have for every unit vector
  $\vec{e}_j\in\setR^m$, $j=1, \dots,m$, that
  $\vec{e}_j \phi_{z_0} \in 
  \VoT^m$. Therefore, it follows from~\eqref{eq:pharm} that 
  \begin{align*}
    \int_\Omega \abs{\nabla \vec{U}}^{p-2} \nabla\vec{U}:\nabla
    (\vec{e}_j \phi_{z_0}) \,{\rm d}x &= 0 \qquad \text{for all $j=1,\dots,m$}.
  \end{align*}
  Thanks to the representation $\vec{U} = \sum_{y \in \nodes} \vec{U}(y) \phi_y$,
   we thus get
  \begin{align*}
    \sum_{y \in \nodes} \vec{U}(y) \int_\Omega \abs{\nabla
      \vec{U}}^{p-2} \nabla \phi_y \cdot\nabla \phi_{z_0} \,{\rm d}x=0
  \end{align*}
  or equivalently 
  \begin{align*}
    \vec{U}(z_0) \underbrace{\int_\Omega \abs{\nabla
        \vec{U}}^{p-2} \nabla \phi_{z_0} \cdot\nabla \phi_{z_0} \,{\rm d}x}_{:=
      \beta_0} &= \sum_{y \in C} \vec{U}(y) \underbrace{\int_\Omega
      \abs{\nabla \vec{U}}^{p-2} (-\nabla \phi_y \cdot\nabla \phi_{z_0})
      \,{\rm d}x}_{=:\beta_y}.
  \end{align*}
  By assumption $\vec{U}$ is non-constant on $\support \phi_{z_0}$, hence
  $\beta_0 > 0$. On the other hand, since $\tria$ is acute, we have $\beta_y \geq 0$
  for all $y \in C$. Let $T\in \tria$, $T\subset\support\phi_{z_0}$
  such that $\vec{U}_{|T}\not\equiv0$.
  If $y\in\nodes\cap T$,~$y\neq z_0$, then the
  acuteness of~$T$ implies $\beta_y>0$. Thus at least two (more
  precisely: at least~$n$) of the $\beta_y$ with $y \in C$
  satisfy~$\beta_y>0$.

  The identity $\phi_{z_0} + \sum_{y \in C} \phi_y = 1$ on
  $\support \phi_{z_0}$ implies $\beta_{0} =\sum_{y \in C} \beta_y$.
  Therefore, defining $\lambda_y := \beta_y/\beta_{z_0}$ yields $\vec{U}(z_0) =
  \sum_{y \in C} \lambda_y \vec{U}(y)$ with $\lambda_y \geq 0$ for $y \in C$
  and $\sum_{y \in C} \lambda_y = 1$. Consequently, at least two of the
  $\lambda_y$ with $z \in C$ satisfy~$\lambda_y>0$ and hence $\vec{U}(z_0)$
  cannot be an extreme point of $\convexhull \set{\vec{U}(y) \,:\, y \in
    C}$. In other words, we have  $\vec{U}(z_0) \not\in \extr(
  \convexhull( \vec{U}(C)))$.
\qed\end{proof}
We can now prove Theorem~\ref{thm:strict}.
\begin{proof}[Theorem~\ref{thm:strict}]
  Since $\vec{U}$ is piecewise linear and thanks to the properties of
  extremal points, it follows that there exists
  $\tilde z_0\in\nodes_0$ with $\vec{U}(z_0)=\vec{U}(\tilde
  z_0)$. Therefore, we may assume w.l.o.g. that $z_0\in\nodes_0$.
 
  Suppose $\vec{U}(z_0) \in \extr(\convexhull(\vec{U}(\Omega))$ and 
  let $C\definedas (\nodes \setminus
  \set{z_0}) \cap \support \phi_{z_0}$ be the neighbors of~$z_0$. Due
  to the convex hull 
  property of $\vec{U}$ (see Theorem~\ref{t:main}) we have that
  $\convexhull (\vec{U}(C)) \subset \convexhull( \vec{U}(\partial
  \Omega))$. Since $\vec{U}(z_0)$ is an extreme point of $\convexhull(
  \vec{U}(\partial \Omega))$ it must also be an extreme point of
  $\extr(\convexhull(\vec{U}(C))$. Therefore, by Lemma~\ref{lem:strictcell},
  $\vec{U}$ is locally constant on~$\support \phi_{z_0}$. 

  A repetition
  of this argument shows that $\vec{U}$ is constant on its connected
  component, which is~$\Omega$ itself. For this argument we need, that every
  simplex contains at least one vertex of~$\nodes_0$.
\qed\end{proof} 

\begin{remark}
  We emphasize that \eqref{eq:pharm} serves as a model problem and that
  Theorem \ref{thm:strict} applies to more general non-linearities. For example,
  the strong convex hull property applies to the
  non-linear problems mentioned in 
  Remark~\ref{rem:psi} if $\mathcal{J}$ is additionally \Frechet{}
  differentiable, \ie if the minimising problem \ref{eq:prob} can be equivalently
  formulated by a partial differential equation, the
  so-called Euler-Lagrange equation; compare with~\eqref{eq:DJ}. 
\end{remark}

\section*{Acknowledgements:} 
Part of this work was carried out during a stay of Christian Kreuzer
at the Mathematical Institute of the University of Oxford. 
This stay was financed by the German Research Foundation
DFG within the research grant KR 3984/1-1.

Last but not least, the authors want to thank Adrian Hirn for the valuable and
inspiring discussions during his stay in Munich.

\providecommand{\bysame}{\leavevmode\hbox to3em{\hrulefill}\thinspace}
\providecommand{\MR}{\relax\ifhmode\unskip\space\fi MR }
\providecommand{\MRhref}[2]{%
  \href{http://www.ams.org/mathscinet-getitem?mr=#1}{#2}
}
\providecommand{\href}[2]{#2}



\end{document}